\newtheorem{theorem}{Theorem}[section]
\newtheorem{lemma}[theorem]{Lemma}
\newtheorem{proposition}[theorem]{Proposition}
\newtheorem{corollary}[theorem]{Corollary}
\numberwithin{equation}{section}
\newenvironment{definition}[1][Definition]{\begin{trivlist}
\item[\hskip \labelsep {\bfseries #1}]}{\end{trivlist}}
\newenvironment{remark}[1][Remark]{\begin{trivlist}
\item[\hskip \labelsep {\bfseries #1}]}{\end{trivlist}}
\def\P{{\mathbb P}}        
\def\E{{\mathbb E}}        
\def\Z{{\mathbb Z}}         
\def\F{{\cal F}}                 
\def\N{{\mathbb N}}       
\def\R{{\mathbb R}}       
\def\T{{\mathbb T}}        
\def\1{{\mathbf 1}}         
\def\S{{\cal S}}               
\def\Var{{\mathbf {Var}\,}}    
\def\deq{\stackrel{\scriptscriptstyle d}{=}}			   
\def\Qset{{\mathcal Q}}      
\title{Universally $L^1$-Bad Arithmetic Sequences}
\author{Patrick LaVictoire\footnote{Supported in part by NSF Grant DMS-0401260.}, UC Berkeley}
\begin{document}
\maketitle

\begin{abstract}
\noindent We present a modified version of Buczolich and Mauldin's proof that the sequence of square numbers is universally $L^1$-bad.  We extend this result to a large class of sequences, including the $d$th powers and the set of primes; furthermore, we show that any subsequence of the averages taken along these sequences is also universally $L^1$-bad.
\end{abstract}

\section{Introduction}

Let $(X,\F,\mu,\tau)$ a dynamical system.  For a sequence of natural numbers $\{n_k\}$ and any $f\in L^1(X)$, we can consider the subsequence average
$$A_Nf(x):= \frac1N \sum_{k=1}^N f(\tau^{n_k}x).$$
By analogy with Birkhoff's Pointwise Ergodic Theorem, we will examine the a.e. convergence or divergence of $A_Nf(x)$ as $N\to\infty$.
\\ \\
We say that $\{n_k\}$ is \emph{universally $L^p$-good} if for every dynamical system $(X,\F,\mu,\tau)$ and every $f\in L^p(X,\mu)$, $\displaystyle\lim_{N\to\infty} A_N f(x)$ exists for almost every $x\in X$.  We say that $\{n_k\}$ is \emph{universally $L^p$-bad} if for every non-atomic ergodic dynamical system $(X,\F,\mu,\tau),$ there exists an $f\in L^p(X,\mu)$ such that the sequence $\{A_N f(x)\}_{N=1}^\infty$ diverges on a set of positive measure in $X$.  Finally, we say that $\{n_k\}$ is \emph{persistently universally $L^p$-bad} if for every non-atomic ergodic dynamical system $(X,\F,\mu,\tau)$ and every infinite $\S\subset\N$, there exists an $f\in L^p(X,\mu)$ such that the sequence $\{A_N f(x)\}_{N\in\S}$ ($N$ taken in increasing order) diverges on a set of positive measure in $X$.
\\ \\
Among the classical results in this topic, Bourgain \cite{JB3} proved that (the integer part of) any sequence of polynomial values is universally $L^p$-good for any $p>1$, and Bourgain \cite{APET} and Wierdl \cite{Primes} showed that the same is true of the sequence of prime numbers.  For these sequences, the Banach principle of Sawyer \cite{Sawyer} implies that pointwise convergence of $A_N f$ for all $f\in L^1$ depends only on the validity of a weak maximal inequality 
\begin{eqnarray}
\label{confusion}
\|\sup_{N\in\N} |A_Nf(x)|\|_{1,\infty}\leq C\|f\|_1\qquad \forall f\in L^1(X).
\end{eqnarray}
The Conze principle \cite{Conze} allows the transfer of such an inequality (with the same constant) from any ergodic dynamical system $(X,\F, \mu,\tau)$ to any other dynamical system.  Therefore, one of these sequences would be universally $L^1$-good if and only if there were some fixed $C>0$ such that (\ref{confusion}) held for every dynamical system $(X,\F,m,T)$, and it would be universally $L^1$-bad otherwise.
\\ \\
This question of pointwise convergence for subsequence averages of $L^1$ functions remained open for virtually all sequences of interest, including all polynomials of degree $\geq2$ and the sequence of primes, until Buczolich and Mauldin \cite{DAAS} \cite{DSA} proved that $\{k^2\}$ is in fact universally $L^1$-bad. 
\\ \\
In this paper, we adapt and extend the construction in \cite{DAAS} to prove the following theorem:
\begin{theorem}
\label{dylan}
Let $n_k=k^d$ for some $d>1$, or $n_k=$ the $k$th prime number.  Given any $C>0$ and any infinite set $\S\subset \N$, there exists a dynamical system $(X,\F,\mu,\tau)$ and an $f\in L^1(X)$ such that $$\|\sup_{N\in\S} |\frac1N \sum_{k=1}^N f\circ\tau^{n_k}|\|_{1,\infty}>C\|f\|_1.$$
\end{theorem}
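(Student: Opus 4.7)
The plan is to adapt the construction of \cite{DAAS} to our generalized sequences. By the Conze principle recalled in the introduction, it suffices to exhibit, for each $C>0$ and each infinite $\S\subset\N$, a \emph{single} measure-preserving system $(X,\F,\mu,\tau)$ and $f\in L^1(X)$ witnessing the failure of the weak-type inequality; I will take $X=\Z/M\Z$ with $\tau x=x+1$ and normalized counting measure, for $M=M(C,\S)$ very large. The failure on this finite system transfers by standard Rokhlin-type arguments to any aperiodic ergodic system, with essentially the same constants.

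A key simplification handles the subsequence aspect: if I can choose a single $N_0\in\S$ (depending on $C$) and an indicator $f=\1_E$ for which $A_{N_0}f(x)$ is bounded below by a positive absolute constant on a set of measure greater than $C\|f\|_1$, then automatically $\sup_{N\in\S}|A_Nf|\ge A_{N_0}f$ and we are done. Hence the entire construction can be tuned to one fixed $N_0\in\S$, and $\S$ enters only through the freedom to choose $N_0$ as large as we wish. This is the sense in which the ``persistent'' clause costs nothing beyond the original Buczolich--Mauldin argument.

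Following the template in \cite{DAAS}, the set $E\subset\Z/M\Z$ will be built from short blocks whose positions are dictated by the finite-scale distribution of $\{n_k\pmod q\}_{k\le N_0}$ for an auxiliary modulus $q\ll M$. For $n_k=k^d$ one exploits that the $d$-th power residues modulo a prime $q$ with $d\mid q-1$ form a subgroup of index $d$ in $(\Z/q\Z)^\times$; for $n_k$ the $k$-th prime, Dirichlet's theorem in arithmetic progressions forces concentration on the $\phi(q)$ reduced residues. In either case, $E$ is designed so that a positive-density set of $x\in\Z/M\Z$ enjoys $A_{N_0}\1_E(x)\gtrsim 1$ while $|E|/M$ remains as small as desired, by arranging that a constant fraction of the translates $x+n_k$, $k\le N_0$, land in $E$.

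The main obstacle is quantitative: one needs sharp enough finite-level equidistribution of $\{n_k\pmod q\}_{k\le N_0}$ to beat the constant $C$, which will force $q$ to grow with $C$ and $N_0$ to grow with $q$. For primes this calls for an effective Siegel--Walfisz-type bound to control the residue counts uniformly; for $d$-th powers it reduces to estimating character sums $\sum_{k\le N_0}\chi(k^d)$ with explicit dependence on $d$, $q$, and $N_0$. A secondary technicality is that $M$ must be chosen $\gg n_{N_0}$ so that the cyclic-group averages faithfully model the averages on $\Z$ and the Conze transfer introduces no wrap-around artifact. Once these quantitative inputs are in hand, the combinatorial core of the Buczolich--Mauldin argument --- counting how often a translate $x+n_k$ meets $E$ --- should carry over with only cosmetic changes.
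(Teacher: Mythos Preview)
Your ``key simplification'' is fatal. For any fixed $N_0$ and any measure-preserving $\tau$, the single average $A_{N_0}f=\frac{1}{N_0}\sum_{k=1}^{N_0}f\circ\tau^{n_k}$ satisfies $\|A_{N_0}f\|_1\le\|f\|_1$, hence $\|A_{N_0}f\|_{1,\infty}\le\|f\|_1$. Thus one can \emph{never} have $\|A_{N_0}f\|_{1,\infty}>C\|f\|_1$ for any $C\ge 1$, regardless of how cleverly $E$ is chosen. Equivalently, if $A_{N_0}\1_E\ge c_0$ on a set of measure $m$, then $c_0 m\le\|A_{N_0}\1_E\|_1\le\P(E)$, so $m\le\P(E)/c_0$; your claim that $m>C\P(E)$ with $c_0$ an absolute positive constant is impossible once $C>1/c_0$. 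The supremum over $N$ is not a formality here: the entire point is that different points $x$ must use different averaging lengths.

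This is precisely why the paper's proof (following Buczolich--Mauldin) is so elaborate. One constructs, by a double induction, a family $f_1,\dots,f_K$ together with \emph{pairwise independent} random variables $X_1,\dots,X_K$ and a measurable function $x\mapsto Q_x$ so that $A_{\psi(Q_x)}f_h(x)\gtrsim X_h(x)$ for each $h$, with $\psi(Q_x)\in\S$ depending on $x$. Setting $f=\sum_h f_h$, one gets $\sup_{N\in\S}A_Nf(x)\gtrsim\sum_h X_h(x)$, and the Weak Law of Large Numbers (applied to the pairwise independent $X_h$) forces this sum to be large on a set of measure bounded away from zero, defeating any prescribed constant $C$. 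The $\S$-dependence enters through the function $\psi$, which selects for each $x$ an element of $\S$ large enough for the residue-class equidistribution (your condition (\ref{line})) to kick in at modulus $Q_x$. The number-theoretic input is not Siegel--Walfisz or character sums, but rather the Poisson-like spacing statistics of Hooley and Granville--Kurlberg for the residue sets $\Lambda_q$, which are needed to run the inductive step. None of this ``combinatorial core'' survives the reduction to a single $N_0$; you will need to engage with the full $(K,M,L)$ induction.
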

\noindent As discussed above, this implies
\begin{corollary}
The sequence of $d$th powers ($d>1$) and the sequence of primes are persistently universally $L^1$-bad.
\end{corollary}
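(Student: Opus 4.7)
The plan is to adapt the Buczolich-Mauldin construction from $\{k^2\}$ to this more general setting while simultaneously arranging for the divergence to manifest only at times $N\in\S$. By the Conze principle described above, it suffices to exhibit, for each $C>0$ and each infinite $\S\subset\N$, a single ergodic system and an $L^1$ function violating the weak maximal inequality along $\S$ with constant $C$. I would work on the finite cyclic model $\Z/M\Z$ (for some $M$ chosen very large in terms of $C$ and of the initial scales available in $\S$), since any strong enough finite-group estimate lifts back to an arbitrary ergodic system via Rokhlin towers and Conze.

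The construction is iterative: pick a rapidly increasing sequence $N_1<N_2<\dots<N_J\in\S$ and build $f=\1_A$ where $A=\bigsqcup_{j=1}^J B_j$ is a disjoint union of sparse blocks. At the $j$-th stage, $B_j$ is placed so that the average $A_{N_j}\1_{B_j}$ is approximately constant and of size $\gtrsim C\mu(A)$ on a set $R_j\subset\Z/M\Z$ of substantial measure. Arranging the $R_j$ to be essentially disjoint then gives the maximal function value $\geq C\mu(A)$ on a set of measure $\gtrsim\sum_j\mu(R_j)$, yielding the desired blow-up of the weak-type constant. The block sparseness, together with rapid growth of the $N_j$, is required to suppress the cross terms $A_{N_j}\1_{B_i}$ for $i\neq j$: for $i>j$, the block $B_i$ is so sparse that it is hit by only a few terms $n_k$ with $k\leq N_j$; for $i<j$, one uses an $L^2$ Birkhoff-type estimate on the harmless small part of $f$ consisting of those earlier blocks.

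The core number-theoretic ingredient is quantitative equidistribution of $\{n_k\bmod M\colon k\leq N\}$: for $n_k=k^d$ this comes from Weyl sums, and for $n_k=p_k$ from equidistribution of primes in residue classes (Siegel-Walfisz). One also chooses $M$ with care. In the $k^d$ case, a parallel to Buczolich and Mauldin's exploitation of quadratic residues is needed: one selects $M$ as a product of primes whose $d$-th power residue subgroup has controlled density, so that the image of $k\mapsto k^d$ modulo $M$ is structured enough to make $A_{N_j}\1_{B_j}$ concentrate on a prescribed region $R_j$. In the prime case, Dirichlet's theorem gives equidistribution over $(\Z/M\Z)^\times$, and the same strategy applies with the roles of scales and residue classes suitably adjusted.

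The main obstacle, and the heart of the argument, is proving the per-scale spreading estimate: that $A_{N_j}\1_{B_j}$ really is (up to a small error) a constant multiple of $\1_{R_j}$ with $\mu(R_j)$ bounded below. This is the content of the Buczolich-Mauldin construction for squares, and adapting it requires replacing their explicit quadratic-residue computations with the corresponding higher-power or prime-distribution analogs while tracking error terms uniformly in $j$. The persistence requirement $N_j\in\S$ costs essentially nothing: since $\S$ is infinite, at each stage one simply selects $N_j$ from $\S$ above the growth threshold imposed by the prior blocks, so constraining the scales to $\S$ modifies only the rate of the growth function, not the construction itself.
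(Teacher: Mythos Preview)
Your outline has a genuine gap: the ``disjoint-scales'' mechanism you describe cannot, on its own, violate a weak-type inequality. Suppose, as you propose, that $A_{N_j}\1_{B_j}$ is approximately $c_j\1_{R_j}$ with $c_j\gtrsim C\mu(A)$. Since $\|A_{N_j}\1_{B_j}\|_1=\mu(B_j)$, this forces $\mu(R_j)\lesssim \mu(B_j)/(C\mu(A))$, and summing over $j$ gives $\sum_j\mu(R_j)\lesssim 1/C$. So the maximal function exceeds $C\mu(A)$ on a set of measure only $\lesssim 1/C$, which is exactly the weak $(1,1)$ bound, not a violation of it. This obstruction is intrinsic to any construction that stacks contributions from essentially disjoint scales; it is precisely why the $L^1$ question for squares remained open after Bourgain.

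What Buczolich and Mauldin actually do---and what the paper does in the present generality---is fundamentally different. One builds not a single characteristic function but a family $f_1,\dots,f_K$ together with pairwise independent random variables $X_1,\dots,X_K$ (each with $\E X_h\geq M$ and bounded variance) such that, for every $x$ and a \emph{single} common scale $Q_x$, the average of each $f_h$ along the sequence at scale $Q_x$ dominates $X_h(x)$. Setting $f=\sum_h f_h$, the Weak Law of Large Numbers (via Chebyshev, using only pairwise independence) forces $\frac1K\sum X_h\geq M/2$ on a set of measure $\geq 1/2$, so the maximal function of $f$ is $\geq KM/2$ on that set while $\|f\|_1\leq K$. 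This genuinely beats the weak bound. The hard work is the double induction (in $K$ and in an inner parameter $L$) that maintains pairwise independence as new functions are added; this is where the structure of $\Lambda_q$ enters, and the relevant number-theoretic input is not Weyl sums or Siegel--Walfisz but the Poisson-like spacing statistics of $d$th-power residues (Granville--Kurlberg) and of reduced residues (Hooley). Your remark that restricting scales to $\S$ is essentially free is correct, and the paper implements it exactly as you suggest; but the core construction you sketched is not the one that works.
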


\noindent This exposition is self-contained, with the exception of the number-theoretic results of Hooley \cite{Hooley} and Granville and Kurlberg \cite{Poisson}.
\\ \\
A few words on the structure of this paper: In Section 2, we present a heuristic version of the argument, in the case of the squares. Then in Section 3, we express the general form of our result (Theorem \ref{jimi}) and prove that its conditions are indeed satisfied by the $d$th powers and the sequence of primes.  In Section 4, we present the main inductive step (Proposition \ref{KML}), show that it implies Theorem \ref{jimi}, and explain the structure of the induction.
\\ \\
In Sections 5-7, we construct the various objects of the succeeding inductive step and prove several necessary lemmas about them.  Section 8 brings these parts together and proves that the properties claimed in Proposition \ref{KML} do indeed hold for this next step, completing the proof of Theorem \ref{jimi}.  In Section 9, we retrospectively explain the purpose of several objects and lemmas in this intricate proof.
\\ \\
Our notation will rarely distinguish between $\Z_N$ (a probability space with the measure-preserving transformation $\tau x=x+1 \mod N$) and $\Z$.  Sets and functions on $\Z_N$ will correspond to $N$-periodic sets and functions on $\Z$, and any object on $\Z_N$ is understood to represent an object on $\Z_{MN}$ for any $M\in\Z^+$.
\\ 
\\Furthermore, we let $\P$ denote the uniform probability measure on $\Z_N$, and $\E X$ the expected value of a random variable $X:\Z_N\to\R$.  Note that the values of $\P$ and $\E$ are unchanged when we consider a $N$-periodic set or function as an object on $\Z_{MN}$ instead, so that we may use $\P$ and $\E$ freely without keeping track of $N$.  We will use $X\deq Y$ to denote that two random variables $X$ and $Y$ (not necessarily on the same probability space) have identical distributions.
\\
\\Finally, we will use both subscripts and superscripts on certain functions $f_h^L, g_h^L, X_h^L, Z_h^L$ and certain sets $\Lambda_q^\gamma,\Xi_q^\gamma$.  To prevent these from being confused with exponential notation, we note here that such superscripts on these objects will not denote exponents; we will therefore write the square of $X_h$ as $(X_h)^2$ rather than $X_h^2$.

\section{Outline of the Argument for the Squares}
Here we will present a heuristic outline of the argument in the original case $n_k=k^2$, before introducing the necessary complications (exceptional sets and the like).  We therefore ask the reader's patience with these claims, some of which are not technically true; the argument presented in Section 3 and thereafter will be rigorous.
\\
\\Buczolich and Mauldin's proof in \cite{DAAS} essentially boils down to three key insights.  The first is that in order to prove that $\{k^2\}$ is universally $L^1$-bad, it suffices to prove the existence of what they term $(K,M)$ families\footnote{Buczolich and Mauldin define these families with many more conditions, in the fashion that we will define our inductive Step $(K,M,L)$ in Proposition \ref{KML}; but the definition here will suffice for heuristic purposes.} for arbitrary $K,M\in\N$.
\begin{definition}
Given $K,M\in\N$ and a measure-preserving system $(\Omega,\F,\tau,\P)$, a \emph{$(K,M)$ family on $\Omega$} consists of the following:
\begin{itemize}
\item $f_1, \dots,f_K\in L^1(\Omega)$ with $f_h\geq0$ and $\E f_h\leq1$
\item $X_1,\dots,X_K\in L^1(\Omega)$ pairwise independent with $\E X_h\geq M$ and $\E (X_h)^2\leq C_M$
\item A measurable function $Q_x:\Omega\to\N$ such that for a.e. $x\in\Omega$,
\begin{eqnarray}
\label{reason}
\frac1{Q_x}\sum_{k=1}^{Q_x} f_h(\tau^{k^2}x)\geq X_h(x)\qquad\forall 1\leq h\leq K
\end{eqnarray}
\end{itemize}
\end{definition}
Note that for each $x\in\Omega$, $Q_x$ does not depend on $h$.
\\
\\The point of constructing a $(K,M)$ family is that, while a single $X_h$ may have a weak $L^1$ norm no greater than the $L^1$ norm of $f$, an average of pairwise independent random variables  with uniformly bounded variance is subject to the Weak Law of Large Numbers.  Thus for some large $K$, the average $\frac{X_1(x)+\dots+X_K(x)}{K}$ will be at least $\frac{M}2$ on a set of probability at least $\frac12$, giving a weak $L^1$ norm of at least $\frac{M}4$, while the $L^1$ norm of the average $\frac{f_1(x)+\dots+f_K(x)}{K}$ remains $\leq1$.
\\
\\ Therefore if we have $(K,M)$ families for all $K,M\in\N$, we can construct a dynamical system and a function that violate any given weak (1,1) maximal inequality.
\\
\\ The second insight is that we can inductively construct a $(K,M)$ family on the probability space $\Z_N$ with the shift operator $\tau x=x+1\mod N$, for some large $N$ depending on $K$ and $M$.  We will use an inner induction: given a $(K-1,M)$ family, we will construct $f_K$ and $X_K$ in stages $f_K^L,\;X_K^L$, so that we have all the required properties except for $\E X_K^L\geq M$, which we replace with $\E X_K^{L+1}\geq\E X_K^L+\epsilon$.  Then for $L$ large enough, we will have a $(K,M)$ family as desired.
\\ \\The main obstruction to this approach is the difficulty of maintaining pairwise independence; when we alter $f_K^L$ and $X_K^L$, we must at the same time alter $f_1^L,\dots,f_{K-1}^L$ and $X_1^L,\dots,X_{K-1}^L$ in order to sustain this property.  We do this by taking into account properties of the distribution of the squares in residue classes.

\begin{definition}
Given $\gamma>0$, and given $q\in\N$ squarefree and odd with $\kappa$ prime factors, consider the following subsets of the integers:
\begin{eqnarray*}
\Lambda_q&:=&\{x\in\Z:(x,q)=1,\;\exists k\in \Z \text{ such that }x\equiv k^2 \mod q\}\\
(-\Lambda_q)^\gamma&:=& -\Lambda_q+(0,\gamma2^\kappa)
\end{eqnarray*}
\end{definition}
It follows from elementary number theory that every $x\in\Lambda_q$ has exactly $2^\kappa$ square roots modulo $q$; and that if $q=p_1\dots p_\kappa$ with $p_i$ large, then $\P(\Lambda_q)=\prod_{i=1}^\kappa \frac{p_i-1}{2p_i}\approx 2^{-\kappa}$.  Clearly $\P((-\Lambda_q)^\gamma)\leq\gamma$; it follows from the result of Granville and Kurlberg \cite{Poisson} that $\P((-\Lambda_q)^\gamma)\approx 1-e^{-\gamma}$.
\\
\\Having set $f_K^0\equiv X_K^0\equiv1$, at each step in $L$ we will take some highly composite $q=q_L$ and set
\begin{eqnarray}
\label{life}
f_K^{L+1}(x)&:=&e^\gamma f_K^L(x)\1_{\Z\setminus(-\Lambda_q)^\gamma}(x)\\
\label{joke}
X_K^{L+1}(x)&:=&e^\gamma X_K^L(x)\1_{\Z\setminus(-\Lambda_q)^\gamma}(x)+c\1_{\Psi_q}(x),
\end{eqnarray}
for some $\Psi_q\subset(-\Lambda_q)^\gamma$ with $\P(\Psi_q)\geq c\gamma$.  To maintain the property (\ref{reason}), for $x\not\in(-\Lambda_q)^\gamma$ we will keep the same length of averaging $Q_x$ we used for $f_K^L$; but for $x\in(-\Lambda_q)^\gamma$ we will redefine $Q_x$ to be a multiple of $q$.  We will define the set $\Psi_q$ such that if we add a generic square number to an element of $\Psi_q$, we have a good chance of landing in the support of $f_K^{L+1}$; the result of \cite{Poisson} ensures that this is possible.
\\
\\ We can ensure that $X_K^{L+1}$ remains independent from our original $(K-1,M)$ family by choosing $q$ relatively prime to the period of that family; note that $\E X_K^{L+1}=\E X_K^L+c^2$.  However, we are not quite out of the woods: for $x\in(-\Lambda_q)^\gamma$ we are now using different values of $Q_x$ with $f_K^{L+1}$ than with our $(K-1,M)$ family.
\\
\\ This defect is repaired by the third insight of \cite{DAAS}: using the inductive hypothesis, we take a different $(K-1,M)$ family $\{g_1^L,\dots, g_{K-1}^L, Z_1^L,\dots, Z_{K-1}^L,Q'_x\}$, where the lengths of averaging $Q'_x$ are relatively prime to $Tq$; we may then ``restrict'' it to $(-\Lambda_q)^\gamma$ by taking 
\begin{eqnarray*}
\bar g_h^L(x)&:=&q2^{-\kappa} g_h^L(x)\1_{(0,\gamma2^\kappa)+q\Z}(x)\\
\bar Z_h^L(x)&:=&Z_h^L(x)\1_{(-\Lambda_q)^\gamma}(x)\\
\bar Q'_x&:=&q Q'_x.
\end{eqnarray*}
This preserves the property (\ref{reason}) on $(-\Lambda_q)^\gamma$ if we assume that
\begin{eqnarray}
\label{excited}
g_h^L(\tau^{Q'_x}y)=g_h^L(y+Q'_x)=f(y)\qquad \forall y\in[x,x+(qQ'_x)^2),
\end{eqnarray}
since if $x\in(-\Lambda_q)^\gamma$, then $|\{k\in[1,qQ'_x]:x+k^2\in(0,\gamma2^\kappa)+q\Z\}|\geq2^\kappa Q'_x$ and this set is equidistributed modulo $Q'_x$.  This implies that for $x\in(-\Lambda_q)^\gamma$,
\begin{eqnarray*}
\frac1{\bar Q'_x}\sum_{k=1}^{\bar Q'_x} \bar g_h^L(x+k^2)=\frac1{qQ'_x}\sum_{k=1}^{Q'_x}2^\kappa\cdot q2^{-\kappa} g_h^L(x+k^2)\geq Z_h(x)=\bar Z_h(x).
\end{eqnarray*}
(That is, averages of $\bar g_h^L$ over long intervals of the squares look like averages of $g_h^L$ on short intervals of the squares.)  Then if for $h\leq K-1$ we let 
\begin{eqnarray*}
f_h^{L+1}(x):=f_h^L(x)\1_{\Z\setminus(-\Lambda_q)^\gamma}(x)+\bar g_h^L\\
X_h^{L+1}(x):=X_h^L(x)\1_{\Z\setminus(-\Lambda_q)^\gamma}(x)+\bar Z_h^L
\end{eqnarray*}
and set the new $Q_x$ to equal $\bar Q'_x$ on the set $(-\Lambda_q)^\gamma$, we find that we have nearly preserved the properties of the family we began with.  Thus we may iterate the inner inductive step (to which we must of course add a version of (\ref{excited})) and thereby construct a $(K,M)$ family.

\section{Main Theorem}

\begin{definition}
For a set $\Lambda\subset \Z_t$, we can define
\begin{eqnarray*}
\P(\Lambda)&:=&\frac{|\Lambda|}t\\
s_t&:=&\P(\Lambda)^{-1}\\
\Lambda^\gamma&:=&\Lambda+(0,\gamma s_t)\subset \Z_t.
\end{eqnarray*}
\end{definition}
\begin{remark}
Note that this turns $\Z_t$ into a probability space, that $s_t$ is the average spacing between elements of $\Lambda$ in $\Z_t$, and that $\P(\Lambda^\gamma)\leq\gamma$.  If $\Lambda\subset\Z$ is periodic by $t$, we can consider it as a subset of $\Z_{nt}$ for any $n\in\Z^+$, and we see that $\P(\Lambda)$ is independent of $n$.
\end{remark}
The main result of this paper is the following:
\begin{theorem}
\label{jimi}
Let $\{n_k\}\subset\N$ an increasing sequence, and $\alpha,\beta>0$.  Say that for every integer $t>1$, there exists a set of residues $\Lambda_t\subset\{n_k + t\Z:k\in\N\}\subset \Z_t$ such that $\Lambda_{st}=\Lambda_s\cap\Lambda_t$ whenever $(s,t)=1$, and that there exist some auxiliary sequences $\{p_j\},\;\{q_j\}$ of pairwise relatively prime positive integers such that
\begin{eqnarray}
\label{wind}
\P(\Lambda_{q_j})\to0,\\
\label{howl}
\inf_j \P(\Lambda_{p_j})>0,\\
\label{wine}
\varepsilon_\gamma:=\gamma-\liminf_{j\to\infty}\P(\Lambda^\gamma_{q_j})&= &o(\gamma),
\end{eqnarray}
such that for all $\gamma>0$ sufficiently small,
\begin{eqnarray}
\label{earth}
\liminf_{j\to\infty}\left|\left\{(u,v)\in\Lambda_{q_j}\times\Lambda_{q_j}:|u-v-w|>\gamma s_{q_j}\;\forall w\in\Lambda_{q_j}\right\}\right|&> & 5\alpha|\Lambda_{q_j}|^2,
\end{eqnarray}
and for all $Q=p_{i_1}\dots p_{i_k}q_{j_1}\dots q_{j_l}$ with $ i_1<\dots<i_k, j_1<\dots<j_l$,
\begin{eqnarray}
\label{line}
\displaystyle \liminf_{N\to\infty} \frac1N \left|\left\{ 1\leq k \leq N: n_k\equiv a \mod Q\right\}\right|> \frac{\beta}{|\Lambda_Q|}& &\forall a\in\Lambda_Q.
\end{eqnarray}
Then, given any $C>0$ and any infinite set $\S\subset \N$, there exists a probability space $(X,\F,\P)$, a measure-preserving transformation $\tau$ on $X$, and an $f\in L^1(X)$ such that $$\|\sup_{N\in\S} |\frac1N \sum_{k=1}^N f\circ\tau^{n_k}|\|_{1,\infty}>C\|f\|_1.$$
\end{theorem}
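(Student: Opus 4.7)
My plan is to follow the three-step framework sketched in Section 2. First, reduce the theorem to the construction, for every $K, M \in \N$, of a $(K,M)$-family on some $\Z_N$ whose stopping times $Q_x$ lie in the prescribed infinite $\S$. Given such a family, the Weak Law of Large Numbers applied to the pairwise independent $X_h$ (with $\E X_h \geq M$ and $\E(X_h)^2$ uniformly bounded) shows that $F := \frac{1}{K}\sum_{h=1}^K f_h$ has $\|F\|_1 \leq 1$ while $\P(\sup_{N \in \S} A_N F \geq M/2) \geq 1/2$ for $K$ large enough, which violates the weak maximal inequality (\ref{confusion}) once $M > 4C$.

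I would then construct these families by nested induction (Proposition \ref{KML}): outer on $K$ and, fixing $K$, inner on $L$. At inner step $L$, I push $\E X_K^L$ up by a fixed positive increment using a single new modulus $q = q_j$, chosen via (\ref{wind}) and (\ref{wine}) so that $\P(\Lambda_q)$ is arbitrarily small while $\P(\Lambda_q^\gamma) \approx \gamma$. I update $f_K, X_K$ by the formulas (\ref{life})–(\ref{joke}), placing the added mass on a set $\Psi_q \subset (-\Lambda_q)^\gamma$ whose existence is guaranteed by (\ref{earth}): enough pairs $(u,v) \in \Lambda_q \times \Lambda_q$ have differences avoiding $\Lambda_q$, carving out a $\Psi_q$ of mass $\gtrsim \gamma$, and condition (\ref{line}) then ensures that the translates $\{x + n_k\}_{k \leq Q_x}$ hit the support of $f_K^{L+1}$ with enough density to validate (\ref{reason}). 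For the coordinates $h < K$, I invoke the outer hypothesis to build a fresh $(K-1,M)$-family on a modulus $N'$ coprime to $q$ and to all previously used moduli --- (\ref{howl}) supplies enough $p_j$'s to keep $\P(\Lambda_{N'})$ bounded below throughout the construction --- and transplant it onto $(-\Lambda_q)^\gamma$ via the $\bar g, \bar Z, \bar Q'$ procedure of Section 2, verifying the analogue of (\ref{excited}) by means of (\ref{line}). Pairwise independence survives because the stage moduli are coprime and the hypothesis $\Lambda_{st} = \Lambda_s \cap \Lambda_t$ forces the relevant indicator functions to factor multiplicatively; $\E (X_h)^2$ grows by at most a fixed amount per inner step, hence remains bounded after the finitely many steps needed. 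The restriction $Q_x \in \S$ is mild: since (\ref{line}) is a $\liminf$ over all $N$, I can always select $Q_x$ inside $\S$ of the required arithmetic form.

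The hardest parts, I expect, are: (i) constructing $\Psi_q$ with both the mass bound $\P(\Psi_q) \gtrsim \gamma$ and a uniform pointwise lower bound for the long-range averages along $\{n_k\}$, which demands a second-moment argument combining (\ref{earth}) with (\ref{line}); (ii) the intricate bookkeeping needed to simultaneously maintain pairwise independence and the pointwise inequality (\ref{reason}) through the rescale-restrict-rebuild step, complicated by the fact that the stopping times jump from $Q_x$ to $qQ_x'$ on $(-\Lambda_q)^\gamma$ while remaining unchanged on its complement; and (iii) arranging the nesting of stages so that at every inductive step a fresh pair of coprime moduli is available, which is exactly why the hypotheses require two pairwise coprime sequences $\{p_j\}, \{q_j\}$ with their respective properties rather than a single one.
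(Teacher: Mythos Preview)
Your overall architecture (nested induction, transplanting a fresh $(K-1,M)$ family onto $(-\Lambda_q)^\gamma$) matches the paper's, but there are real gaps. The most concrete is your treatment of $\S$: you claim you can ``select $Q_x$ inside $\S$ of the required arithmetic form'', but $\S$ is an arbitrary infinite set and need not meet $\Qset$ at all. In the paper $Q_x$ is a \emph{modulus} in $\Qset$ dividing $T$, while the averaging length along $\{n_k\}$ is a separate quantity $\psi(Q_x)\in\S$, chosen larger than $N(Q_x)$. These are linked by the local periodicity condition (\ref{thief}), which says $f_h$ is $Q_x$-periodic on $[x,x+\psi(AQ_x)]$; only then does (\ref{line}) convert the $\Lambda_{Q_x}$-average (\ref{joker}) into an $\{n_k\}$-average at a time in $\S$, as in (\ref{worth}). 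You have no analogue of (\ref{thief}), and propagating it through the induction is what forces the parameter $A$.

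Two further gaps. First, you are missing the periodic rearrangement of Section~5. After switching $Q_x$ on $\Psi_q$ to a multiple of $pqT_L$, you need $\frac{1}{|\Lambda_{pT_L}|}\sum_{u\in\Lambda_{pT_L}} f_K^L(x+u)\geq\frac12\E f_K^L$ uniformly in $x$ (the $pT_L$-factor in Lemma~\ref{fate}); nothing in your sketch gives this, since $f_K^L$ could be concentrated in residue classes mod $T_L$ that $\Lambda_{pT_L}$ misses. The paper obtains it by randomly shifting $f_K^L$ on blocks of length $T_L\lfloor\sqrt p\rfloor$ and applying Chernoff's inequality to find a good realization (Lemma~\ref{wildcat}); this, not your stated reason, is where (\ref{howl}) actually enters. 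Second, ``pairwise independence survives because moduli are coprime'' is too quick: gluing $\tilde X_h^L\1_{\Delta_q}$ to $\bar Z_h\1_{\Z\setminus\Delta_q}$ yields something independent of $X_K^{L+1}$ only if the conditional laws of $X_h^{L+1}$ on $\Delta_q$, on $\Psi_q$, and on the remainder all coincide (cf.\ (\ref{norwegian})). This is why the paper prescribes the exact distribution $Y_{n,\gamma,\alpha}$ for every $X_h$ and carries the extra period $R$ together with the coprimality parameter $D$; coprimality of the stage moduli alone does not suffice.
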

\begin{remark}
(\ref{wine}) states that $\Lambda_{q_j}$ does not cluster too much in $\Z_{q_j}$ so that $\P(\Lambda_{q_j}^\gamma)$ is nearly $\gamma$, while (\ref{earth}) states that the set of differences of elements in $\Lambda_{q_j}$ is not concentrated near $\Lambda_{q_j}$.  (\ref{line}) states that each point of $\Lambda_Q$ is hit uniformly often by the sequence $\{n_k\}$, if $Q$ is any squarefree product of terms from the auxiliary sequences $\{p_j\}$ and $\{q_j\}$.  (The function $x\to Q_x$ we will later construct will take such products as its values.)
\end{remark}
\noindent \textbf{Claim.} \emph{Theorem \ref{jimi} implies Theorem \ref{dylan}.}

\begin{proof}
For the sequence of primes, we take $\Lambda_t$ to be the set of integers relatively prime to $t$, then let $p_j$ be distinct primes and $q_j$ be highly composite such that $q_j^{-1}\phi(q_j)\to0$; since $\displaystyle \prod_{p \text{ prime}} \frac{p-1}p =0$, it is possible to choose such $q_j$.  The property (\ref{line}) is clear from Dirichlet's theorem on arithmetic progressions.
\\ \\ For the sequence of $d$th powers, we will take $\Lambda_t$ to be the residues of $k^d$ mod $t$ which are units mod $t$.  We will take $p_j$ to be distinct primes congruent to 1 mod $d$, and $q_j$ to be products of $j$ such primes.  Note that if $Q$ has $\kappa$ prime factors each congruent to 1 mod $d$, then every $x\in\Lambda_Q$ will have precisely $d^\kappa$ $d$th roots in $\Z_Q$, and $\P(\Lambda_Q)= \phi(Q)d^{-j}$.  If we choose all of the prime factors sufficiently large, we can ensure that $\phi(Q)\geq\frac12$ for all squarefree products $Q$ of these sequences, so that (\ref{line}) is satisfied. 
\\ \\
To prove (\ref{wine}) and (\ref{earth}) for each of these cases, we will use some recent results on the distribution of the residues $\Lambda_{q_j}$ (as the average spacing $s_{q_j}\to\infty$); roughly speaking, in each case they are distributed locally like a Poisson process of rate $\P(\Lambda_{q_j})$.  We begin by introducing some notation.

\begin{definition}
Let $\{y_k\}_{k=1}^N$ be a strictly increasing sequence of real numbers in $[0,N)$, and consider it as a subset of $\T=\R/N\Z$.  For $E\subset \{1,\dots,N\}$, and $\theta>0$, we define a probability measure and a cumulative distribution function:
\begin{eqnarray}
\tilde\P_k(E)&:=&\frac{|E|}N\\
\label{barefoot}
F(\theta)&:=&\tilde\P_k( |y_{k+1}-y_k|>\theta).
\end{eqnarray}
\end{definition}
Now if we consider the normalized set $s_q^{-1}\Lambda_q=\{y_i: 1\leq i\leq |\Lambda_q|\}\subset[0,|\Lambda_q|)$ (where the $y_i$ are taken in increasing order), we can examine the cumulative distribution function $F_q(\theta)$ defined as above.  Hooley \cite{Hooley} proves for the primes, and Granville and Kurlberg \cite{Poisson} prove for the $d$th powers\footnote{While Corollary 2 in that paper is stated for the entire set of $d$th power residues modulo $q$, we see that passing to the subset $\Lambda_q$ does not change equation (1) and thus we may apply the same result.}, that $F_q(\theta)\to e^{-\theta}$ pointwise as $s_q\to\infty$ (for which reason they call these sets ``Poisson distributed'').  Thus
\begin{eqnarray*}
\P(\Lambda^\gamma_q)=|\Lambda_q|^{-1}\sum_i |y_{i+1}-y_i|\wedge\gamma =\int_0^\gamma F_q(\theta) d\theta\to 1-e^{-\gamma},
\end{eqnarray*}
so that we have (\ref{wine}).  To prove (\ref{earth}) from this distributional fact, however, requires a little more work.
\begin{lemma}
\label{poisson}
Let $\zeta_l(\theta):=\tilde\P_i(|y_i-y_k-y_l|>\theta \,\forall k)$.  For $\theta>0$ and $J>4$,
\begin{eqnarray*}
\tilde\P_l\left(\zeta_l(\theta)>\frac{F(J\theta)+F(2\theta)-1}2\right)&\geq & F(2\theta)-F((J-2)\theta).
\end{eqnarray*} 
\end{lemma}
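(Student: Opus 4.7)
Proof plan:

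My approach is to identify a large set $L^* \subset \{1, \ldots, N\}$ of ``good'' indices on which one can establish $\zeta_l(\theta) > (F(J\theta) + F(2\theta) - 1)/2$ pointwise, with $L^*$ having density at least $F(2\theta) - F((J-2)\theta)$. The natural candidate is
\[
L^* := \{l : 2\theta < y_{l+1} - y_l \leq (J-2)\theta\},
\]
which by the definition of $F$ has density exactly $F(2\theta) - F((J-2)\theta)$, matching the target lower bound on $\tilde\P_l$ in the conclusion.

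For $l \in L^*$, I would first decompose $\zeta_l(\theta)$ using the structure of the complement of $B := \bigcup_k (y_k - \theta, y_k + \theta)$. Writing $B^c$ as a disjoint union of ``large-gap'' intervals $G_k := (y_k + \theta, y_{k+1} - \theta)$ taken over $k$ with $g_k := y_{k+1} - y_k > 2\theta$ yields the identity
\[
N \cdot \zeta_l(\theta) \;=\; \sum_{k : g_k > 2\theta} \bigl|Y \cap (y_l + G_k)\bigr|.
\]
The task then reduces to lower-bounding this sum pointwise on $L^*$.

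To do so, I would split the range of summation into two classes based on the size of $g_k$: ``huge'' gaps with $g_k > J\theta$ (density $F(J\theta)$), and the remaining ``large'' gaps with $2\theta < g_k \leq J\theta$ (density $F(2\theta) - F(J\theta)$). For $l \in L^*$ and $k$ with a huge gap, the shifted interval $y_l + G_k$ has length $g_k - 2\theta > (J-2)\theta \geq g_l$, strictly exceeding the right gap at $l$; a pigeonhole argument then forces $y_l + G_k$ to contain at least one $y_i \in Y$. The contributions from the two classes, combined with a symmetrization step handling overlapping shifted-gap configurations, should produce the target threshold on $\zeta_l$.

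The main obstacle is extracting the precise numerical form $(F(J\theta) + F(2\theta) - 1)/2$: the additive combination and the factor of $\tfrac12$ suggest that one controls the two complementary ``bad'' configurations of $l$ (namely $g_l \leq 2\theta$ and $g_l > (J-2)\theta$) via a union-bound argument, then halves the result to correct for a double-count between overlapping shifted intervals. Making this precise appears to require a careful double-counting identity — likely obtained by swapping the roles of $l$ and $i$ in the summation and comparing with the direct expansion — rather than a naive pigeonhole bound, and this is where the technical core of the argument lies.
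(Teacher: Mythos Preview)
Your choice of the set $L^*$ and the gap decomposition $N\zeta_l(\theta) = \sum_{g_k > 2\theta} |Y \cap (y_l + G_k)|$ match the paper's starting point (the paper uses the backward gap $y_l - y_{l-1}$ rather than the forward one, but this is cosmetic). However, the pigeonhole step you propose for huge gaps is false: that $|y_l + G_k| > (J-2)\theta \geq g_l$ tells you nothing about whether $y_l + G_k$ meets $Y$, because $y_l + G_k$ sits near $y_l + y_k$, not near $y_l$. Whether it contains a point of $Y$ depends on the gaps of $Y$ at that remote location, and those gaps are uncontrolled. So no pointwise lower bound on $\zeta_l$ over $L^*$ is available this way, and neither the swap $l \leftrightarrow i$ nor an overlap double-counting correction will recover the argument.

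The paper's mechanism is different and does not try to bound $\zeta_l$ alone on $L^*$. One splits the complement $1 - \zeta_l$ according to whether the offending index $k$ has $g_k \leq J\theta$ (the set $A$) or $g_k > J\theta$. The first contribution $B_{y_l}$ is bounded uniformly in $l$ by a genuine pigeonhole: $|B_z|$ shifted points lie in $|A|$ intervals of width $2\theta$, so at least $|B_z| - |A|$ of them have a successor in the same interval, forcing $g_i \leq 2\theta$; hence $\tilde\P_i(B_z) \leq 2 - F(J\theta) - F(2\theta)$. The huge-gap contribution $C_{y_l}$ is handled by a \emph{transfer to the neighboring index}: if $i \in C_{y_l}$ and $2\theta < y_l - y_{l-1} < (J-2)\theta$, then $y_i - y_{l-1}$ lands strictly inside the same huge gap $(y_k + \theta, y_{k+1} - \theta)$, so $i$ counts toward $\zeta_{l-1}$. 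This yields $\zeta_l + \zeta_{l-1} \geq F(J\theta) + F(2\theta) - 1$ whenever the backward gap at $l$ lies in the specified range, and the factor $\tfrac12$ then comes simply from the fact that one of $\zeta_l, \zeta_{l-1}$ must exceed half the sum, combined with the shift-invariance of the distribution of $l \mapsto \zeta_l$. The relevant swap is $l \leftrightarrow l-1$, not $l \leftrightarrow i$.
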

\begin{proof}
Define $A:=\{k:|y_{k+1}-y_k|\leq J\theta \};$ then $\tilde\P_k(A)=1-F(J\theta)$.
\\
\\For $z\in[0,N)$, let 
\begin{eqnarray*}
B_z&:=&\{i: \exists k\in A: |y_i-y_k-z|\leq\theta\}\\
C_z&:=&\{i: \exists k\not\in A: |y_i-y_k-z|\leq\theta \}.
\end{eqnarray*}
There are $|B_z|$ points $y_i-z$ contained in the $|A|$ intervals $[y_k-\theta,y_k+\theta]$; thus it must be that for at least $|B_z|-|A|$ of these points, their successors $y_{i+1}-z$ also lie in the same interval, which implies that $|y_{i+1}-y_i|\leq2\theta$.  Thus $\tilde\P_i(B_z)\leq \tilde\P_k(A)+1-F(2\theta)=2-F(J\theta)-F(2\theta)$.
\\
\\ Therefore, for any $l$, 
\begin{eqnarray*}
1-\zeta_l(\theta)&\leq& \tilde\P_i(\exists k\not\in A: |y_i-y_l-y_k|\leq\theta)+\tilde \P_i( \exists k\in A: |y_i-y_k-y_l|\leq\theta )\\
&=&\tilde\P_i(C_{y_l})+\tilde \P_i(B_{y_l})\\
&\leq&\tilde\P_i(C_{y_l})+2-F(J\theta)-F(2\theta).
\end{eqnarray*}
But if $i\in C_{y_l}$ and $2\theta<|y_l-y_{l-1}|<(J-2)\theta$, then
$$y_i-y_{l-1}\in(y_k+\theta, y_{k+1}-\theta)$$ so that $|y_i-y_k-y_{l-1}|>\theta \,\forall k$.  This implies that $$\zeta_l(\theta)+\zeta_{l-1}(\theta)\geq \zeta_l(\theta)+\tilde \P_i(C_{y_l})\geq F(J\theta)+F(2\theta)-1$$ whenever $y_l-y_{l-1}\in(2\theta,(J-2)\theta)$.
\\
\\Therefore
\begin{eqnarray*}
\tilde\P_l\left(\zeta_l(\theta)>\frac{F(J\theta)+F(2\theta)-1}2\right)&\geq&\frac12\tilde\P_l(\zeta_l(\theta)+\zeta_{l-1}(\theta)>F(J\theta)+F(2\theta)-1)\\
&\geq& \tilde\P_l(2\theta<y_l-y_{l-1}\leq(J-2)\theta)\\
&\geq& F(2\theta)-F((J-2)\theta).
\end{eqnarray*}
\end{proof}

\noindent Now if $\gamma<\frac1{12}$, then $e^{-2\gamma}>\frac67$ and we can set $J:=\frac{\log 2}\gamma>4$; then Lemma \ref{poisson} implies
\begin{eqnarray*}
\tilde\P_l(\zeta_l>1/8)&\geq&\tilde\P_l\left(\zeta_l>\frac{-1+e^{-J\gamma}+e^{-2\gamma}}2+o_j(1)\right)\\
&\geq& e^{-2\gamma}-e^{(2-J)\gamma}+o_j(1)\geq\frac14
\end{eqnarray*}
 for $j$ sufficiently large, so that (\ref{earth}) holds with $\alpha=\frac1{160}$.
 \end{proof}
 
\begin{remark}
Note that (\ref{line}) cannot be satisfied by polynomials other than $n_k=c_dk^d+c_0$, as can be seen from Cohen's result \cite{Cohen} that if we fix $a\in\Z_p[x]$ of degree $d$ which is not of this type and consider $a(x)-y$ as $y\in\Z_p$ varies, for some fixed proportion of $y$ this polynomial will have $d$ distinct roots, while for some fixed proportion it will have 1 root.  This prevents (\ref{line}) from holding for sufficiently composite products.  However, the author expects that every polynomial of degree 2 or greater over $\Z$ should be persistently universally $L^1$-bad, and that some clever variant of this argument should suffice to prove as much.
\end{remark}

\section{The Inductive Step}

For an inductive argument to work, we will have to specify additional properties of the objects we seek, including a fixed distribution for the functions $X_h^L$.

\begin{definition}
Given the sequences $\{p_j\}$ and $\{q_j\}$ as in Theorem \ref{jimi}, denote the set of their (squarefree) products
\begin{eqnarray}
\Qset&:=&\{p_{i_1}\dots p_{i_k}q_{j_1}\dots q_{j_l}: i_1<\dots<i_k, j_1<\dots<j_l\}
\end{eqnarray}
and the functions (depending on the infinite $\S\subset\N$ in Theorem \ref{jimi})
\begin{eqnarray}
N(Q)&:=& \max\left\{N: \exists a\in\Lambda_Q: \left|\left\{ 1\leq k \leq N: n_k\equiv a \mod Q\right\}\right|\leq \frac{\beta N}{|\Lambda_Q|} \right\}\\
\psi(n)&:=& \inf\{s\in\S: s>N(Q) \;\;\forall Q\in\Qset, Q\leq n\}.
\end{eqnarray}
\end{definition}
\begin{definition}
For $0<\gamma<1$ and $0<\alpha<1$, we recursively define functions $Y_{n,\gamma,\alpha}:[0,1]^{n+1}\to\R$, which we may consider as random variables with the Lebesgue measure.  Let $Y_{0,\gamma,\alpha}(x_0)\equiv1$, and
\begin{eqnarray}
\label{distance}
Y_{n+1,\gamma,\alpha}(x_0,\dots,x_{n+1}):=(1-\gamma)^{-1}Y_{n,\gamma,\alpha}(x_0,\dots,x_n)\1_{[\gamma,1)}(x_{n+1})+\alpha\1_{[0,\alpha\gamma)}(x_{n+1}).
\end{eqnarray}
\end{definition}
Note that 
\begin{eqnarray*}
\E(Y_{n,\gamma,\alpha})&=&\E Y_{n-1,\gamma,\alpha}+\alpha^2\gamma=1+n\alpha^2 \gamma,\\ 
\E(Y_{n,\gamma,\alpha})^2&=&(1-\gamma)^{-1}\E (Y_{n-1,\gamma,\alpha})^2+\alpha^3\gamma\leq (1-\gamma)^{-n}(1+\alpha^3)\leq2(1-\gamma)^{-n}.
\end{eqnarray*}
\noindent Given the conditions of Theorem \ref{jimi}, we may assume that all the $p_j$ and $q_j$ are odd and that $\alpha$ is a dyadic rational.  Choose $\gamma_0<1/2$ such that for all $0<\gamma<\gamma_0$, (\ref{wine}) and (\ref{earth}) hold and $\varepsilon_\gamma<\alpha\gamma$; we will write $\varepsilon=\varepsilon_\gamma$ from now on unless otherwise specified.  We are now ready to state the inductive step:

\begin{proposition}[Step $(K,M,L)$]
\label{KML}
Assume the conditions of Theorem \ref{jimi}.  Given any dyadic rational $0<\gamma<\gamma_0$, any $K,M,L\in\N$ with $L\leq M$, constants $A,\delta>0$ with $\delta<8\varepsilon$, and an odd integer $D$, there exist $T\in\Qset$ and $R\in\N$ with $(T,D)=(R,D)=1$ and the following objects:
\begin{description}
\item[(1)] $f_1, \dots,f_K\in\ell^1(\Z_T)$ with $f_h\geq0$ and $1\leq\E f_h\leq(1+4\varepsilon)^{(K-1)M+L}$.
\item[(2)]  $X_1,\dots,X_K\in\ell^1(\Z_{RT})$ pairwise independent with $X_h\deq Y_{M,\gamma,\alpha}\;\forall h<K$ and $X_K\deq Y_{L,\gamma,\alpha}$
\item[(3)] An exceptional set $E\subset \Z_T$ with $\P(E)\leq \delta$
\item[(4)] $Q_x:\Z_{T}\to\Z^+$ with $Q_x\in\Qset$ and $Q_x\mid T\;\forall x$, such that for each $x\not\in E$,
\begin{eqnarray}
\label{joker}
\frac1{|\Lambda_{Q_x}|}\sum_{\substack{a\in\Lambda_{Q_x},\\ 1\leq a\leq Q_x}} f_h(x+a)\geq X_h(x)&&\forall 1\leq h\leq K
\end{eqnarray}
and
\begin{eqnarray}
\label{thief}
f_h(x+y-Q_x)=f_h(x+y)\qquad \forall1\leq h\leq K,\; Q_x\leq y\leq \psi(AQ_x).
\end{eqnarray}
\end{description}
\end{proposition}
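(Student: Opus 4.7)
The proof proceeds by double induction: outer on $K$ and inner on $L$. The base case $(K, M, L) = (1, M, 0)$ is immediate (take $T = R = 1$, $f_1 \equiv X_1 \equiv 1$, $E = \emptyset$, $Q_x \equiv 1$; note $Y_{0,\gamma,\alpha} \equiv 1$), and the transition from Step $(K-1, M, M)$ to Step $(K, M, 0)$ amounts to relabeling and appending a constant $f_K \equiv X_K \equiv 1$, trivially independent of everything previous and matching the distribution $Y_{0, \gamma, \alpha}$. The substantive content is the inner step from $(K, M, L)$ to $(K, M, L+1)$, which I now sketch.

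Given the Step $(K, M, L)$ data $\{f_h, X_h, Q_x, E, T, R\}$, I would first apply the inductive hypothesis Step $(K-1, M, M)$ with the forbidden divisor replaced by a large multiple of $DTR$, producing an independent auxiliary family $\{g_h, Z_h, Q'_x, E', T', R'\}$ on $\Z_{T'}$, coprime to the original in all relevant senses. Next, choose from the auxiliary sequence $\{q_j\}$ an index $j$ so large that $q := q_j$ is coprime to $DTRT'R'$, that $\P(\Lambda_q)$ is arbitrarily small, and that the error terms in (\ref{wine}) and (\ref{earth}) are controlled by $\delta$. Then carve out a set $\Psi \subset \Z_q$ of density exactly $\alpha\gamma$, lying mostly inside $(-\Lambda_q)^\gamma$, together with a disjoint set on which to scale the previous $X_K$ by $(1-\gamma)^{-1}$; the discrepancy between $\gamma$ and $\P((-\Lambda_q)^\gamma) = \gamma - \varepsilon$ is absorbed into an extra set of density $\varepsilon < \alpha\gamma$ that is ultimately thrown into $E^{\mathrm{new}}$.

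On $\Z_{Tq}$ (and $\Z_{Rq}$ for the random variables), build the new $f_K^{\mathrm{new}}$ and $X_K^{\mathrm{new}}$ by templates following (\ref{life}) and (\ref{joke}), so that coprimality of $q$ with $R$ together with the recursive definition (\ref{distance}) forces $X_K^{\mathrm{new}} \deq Y_{L+1, \gamma, \alpha}$. For indices $h < K$, apply the restriction trick of Section 2 to blend $f_h, X_h$ with the auxiliary $\bar g_h, \bar Z_h$ on $(-\Lambda_q)^\gamma$, setting $Q_x^{\mathrm{new}} := q Q'_x$ there; the averaging identity (\ref{joker}) for $h < K$ on this set then follows from the equidistribution (\ref{line}) of the $n_k$ among residues modulo $Q'_x$ within each coset of $\Lambda_q$, combined with the $q$-periodicity of $\bar g_h$. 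Finally take $T^{\mathrm{new}}$ to be a suitable multiple of $TT'q$ so that (\ref{thief}) holds for the largest new averaging length $\psi(A \bar Q'_x)$, set $R^{\mathrm{new}} := RR'q$, and define $E^{\mathrm{new}}$ to contain $E, E'$, the density-$\varepsilon$ correction set above, and the points of $(-\Lambda_q)^\gamma$ where the restriction trick fails because of "bad" pairs in $\Lambda_q \times \Lambda_q$.

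The main obstacle, I expect, is the simultaneous verification of all four properties after a single inner step, since the requirements pull against one another: exact distribution for the $X_h$'s forces carefully chosen subsets of $\Z_q$ of prescribed densities, pairwise independence forces coprimality of every modulus in sight, property (\ref{joker}) forces the Poisson-type bound (\ref{earth}) to control how often differences in $\Lambda_q$ land close to $\Lambda_q$, and (\ref{thief}) forces $T^{\mathrm{new}}$ to be divisible by $\psi(A \bar Q'_x)$. The $(1 + 4\varepsilon)$ growth in $\E f_h$ accumulates over the $(K-1)M + L$ inner steps, which is precisely why the hypothesis $\varepsilon_\gamma = o(\gamma)$ is imposed, while keeping $\P(E^{\mathrm{new}}) \leq \delta$ requires the $5\alpha$ margin in (\ref{earth}). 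The bulk of Sections 5--8 of the paper is presumably devoted exactly to this multi-faceted bookkeeping and to isolating the estimates into lemmas that can be invoked cleanly here.
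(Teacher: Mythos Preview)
Your overall inductive scheme matches the paper's, and your treatment of the base case, the transition $(K-1,M,M)\to(K,M,0)$, and the restriction trick for $h<K$ on $(-\Lambda_q)^\gamma$ is essentially correct. But there is a genuine gap in the inner step: you never explain how property~(\ref{joker}) can hold for $h=K$ at points $x\in\Psi_q$ where you have just redefined $Q_x$ to be a large multiple of $q$. On $\Psi_q$ you set $X_K^{\mathrm{new}}(x)=\alpha$, so you must show
\[
\frac{1}{|\Lambda_{Q_x^{\mathrm{new}}}|}\sum_{a\in\Lambda_{Q_x^{\mathrm{new}}}} f_K^{\mathrm{new}}(x+a)\ \geq\ \alpha.
\]
The factor coming from $\Lambda_q$ is fine (that is what $\Psi_q$ is designed for), but the factor coming from $\Lambda_{pT}$ (or $\Lambda_{T'}$ in your notation) is an average of the \emph{old} $f_K^L$ over a residue set much finer than its own period $T_L$, and there is no a priori reason this average is bounded below by anything. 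The paper's Section~5 introduces the \emph{periodic rearrangement} $f\mapsto\tilde f^\omega$ on $\Z_{pT}$ (using an auxiliary $p=p_{i(K,L)}$ and a randomized block-shift) precisely to force, via Lemma~\ref{wildcat}, that
\[
\frac{1}{|\Lambda_{pT}|}\sum_{a\in\Lambda_{pT}}\tilde f_K^{L,\omega}(x+a)\ \geq\ \tfrac12\,\E f_K^L\ \geq\ \tfrac12
\]
uniformly in $x$. This is the missing ingredient without which Lemma~\ref{fate} fails and property~\textbf{(4)} cannot be verified for $h=K$ on $\Psi_q$.

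A second, smaller issue: you assert that choosing $\Psi$ of density \emph{exactly} $\alpha\gamma$ in $\Z_q$ makes $X_K^{\mathrm{new}}\deq Y_{L+1,\gamma,\alpha}$, but $\alpha\gamma q$ need not be an integer, and more seriously $\P(\Delta_q)$ cannot be made exactly $1-\gamma$ either. The paper does not attempt this; instead it enlarges the period by a further factor $r$ (so $R_{L+1}=R_LR'r$) and multiplies by indicators of intervals $\Gamma_s,\Gamma_t\subset\Z_{rR'R_LT_{L+1}}$ of the correct fractional lengths. Ensuring $(r,D)=1$ is exactly why $\gamma,\alpha$ are required to be dyadic rationals and $D$ odd, and why the cardinalities $|\Psi_q|,|\Delta_q|$ are chosen coprime to $D$ in~(\ref{minute}). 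Your sketch elides this mechanism, and with it the reason the parameter $R$ exists separately from $T$.
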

\begin{remark}
In the final section of this paper, we will discuss the significance of the parameters $A,D,R,$ and $\delta$, as well as the distribution $Y_{n,\gamma,\alpha}$, the reason we require $\gamma$ and $\alpha$ to be dyadic rationals, and other points whose necessity in the argument is not immediately obvious.  For the time being, we ask the reader's trust that these complications are required in order to make a strong enough inductive step.
\end{remark}

\noindent \textbf{Claim.} \emph{Proposition \ref{KML} implies Theorem \ref{jimi}.}
\begin{proof}
If we fix $C>0$, take $\gamma>0$ small, and set $M= \lfloor C/\gamma\rfloor$ and $K=\lfloor \gamma/C\varepsilon\rfloor$, and take $\delta\leq\frac14, A=D=1$, then at Step $(K,M,M)$ we have 
\begin{eqnarray*}
\E f_h&\leq& (1+4\varepsilon)^{KM}\leq (1+4\varepsilon)^{1/\varepsilon}\leq e^4, \\
\E X_h&=&\E Y_{M,\gamma,\alpha}\geq1+M\alpha^2\gamma\geq C\alpha^2,\\
\E (X_h)^2&=&\E Y_{M,\gamma,\alpha}^2\leq 2(1-\gamma)^{-M}\leq 2e^{2\gamma M}\leq2e^{2C}.
\end{eqnarray*}
Therefore, since the $X_h$ are pairwise independent and identically distributed,
\begin{eqnarray*}
\P\left(\frac{X_1+\dots+X_K}K\leq C\alpha^2/2\right)&\leq&\P\left(\left|\frac{X_1+\dots+X_K}K-\E X_1\right|\geq C\alpha^2/2\right)\\
&\leq&(C\alpha^2/2)^{-2}\E\left|\frac{X_1+\dots+X_K}K-\E X_1\right|^2\\
&\leq&4\alpha^{-2}C^{-2}K^{-1}\E(X_1-\E X_1)^2\\
&\leq& \frac{8e^{2C}}{\alpha^2C^2K}
\end{eqnarray*}
and for $\gamma$ sufficiently small (by (\ref{wine}), this means $K=\lfloor \gamma/C\varepsilon\rfloor$ sufficiently large), this is less than $\frac12$.
\\
\\Now if we consider the probability space $\Z_{RT}$ with the measure-preserving transformation $\tau x=x+1$, and set $f=f_1+\dots+f_K$ and $X=X_1+\dots+X_K$, then by (\ref{line}), (\ref{joker}) and (\ref{thief}) 
\begin{eqnarray}
\label{worth}
\frac1{\psi(Q_x)}\sum_{k=1}^{\psi(Q_x)} f(x+n_k)> \frac \beta{|\Lambda_{Q_x}|}\sum_{a\in\Lambda_{Q_x}} f(x+a)\geq \beta X(x)\qquad \forall x\not\in E,
\end{eqnarray}
and since $\psi(Q_x)\in\S$, this implies 
$$\P\left(\sup_{N\in\S} \frac1N\sum_{k=1}^N f\circ\tau^{n_k}>CK\alpha^2\beta/2\right)\geq\P\left(\frac XK> \frac{C\alpha^2}{2}\right)-\P(E)\geq\frac14.$$
Therefore 
$$\|\sup_{N\in\S} \frac1N\sum_{k=1}^N f\circ\tau^{n_k}\|_{1,\infty}\geq \frac{CK\alpha^2\beta}{8}\geq \frac{C\alpha^2\beta}{8e^4}\|f\|_1.$$
Since $C$ is arbitrary, there can be no maximal inequality.
\end{proof}
 
\noindent We will prove Proposition \ref{KML} by induction on $K$ and $L$.  We fix $M$ and $\gamma$ at the beginning of the argument (since they will not change as $K$ and $L$ change), and prove each Step $(K,M,L)$ for all values of $A,\delta$ and $D$.
\\
\\Note that Step $(1,M,0)$ is trivial, and that Step $(K-1,M,M)$ implies Step $(K,M,0)$: fix the parameters $A,\delta$ and $D$ and obtain $\{ f_1, \dots, f_{K-1}, X_1,\dots,X_{K-1},E, Q_x\}$ satisfying \textbf{(1)-(4)} on $\Z_T$.  Now set $f_K\equiv 1$, $X_K\equiv 1$ on $\Z$.  This clearly satisfies the conditions.
\\
\\Therefore, to prove Theorem \ref{jimi}, it suffices to show that if $L<M$ and we know Step $(K,M,L)$ and all previous steps (for all values of $A,\delta$ and $D$, and for a fixed $\gamma$), we can prove Step $(K,M,L+1)$ for a fixed $A,\delta$ and $D$ and the same $\gamma$.  We start by applying Step $(K,M,L)$ with $A,\delta/4$ and $D$ to obtain a family $\{T_L,R_L,f_1^L,\dots,f_K^L,X_1^L,\dots,X_K^L,E_L, Q_{x,L}\}$ satisfying \textbf{(1)-(4)}; we must then construct $\{T_{L+1},R_{L+1},f_1^{L+1},\dots,f_K^{L+1},X_1^{L+1},\dots,X_K^{L+1},E_{L+1}, Q_{x,L+1}\}$ to have the required properties.

\section{Periodic Rearrangements}

\noindent As in \cite{DAAS}, we will essentially construct the next functions by choosing a new $q$ and redefining the current functions on the subset $(-\Lambda_q)^\gamma$.  On this set, we will be selecting a new $Q_{x,L+1}\gg T_L$, and we want to ensure that the left side of (\ref{joker}) will be uniformly large on a significant subset of $(-\Lambda_q)^\gamma$.  Since an average of $f_h^L$ over $\Lambda_Q$ (in the sense of (\ref{joker}) may be irregular for large $Q$, we will modify the $f_h^L$ in advance so that these averages will be bounded below by a constant, while preserving their averages over $\Lambda_Q$ for smaller $Q$.
\\
\\Following Buczolich and Mauldin, we call this modification a \emph{periodic rearrangement}.  Given natural numbers $p\gg T$ with $(p,T)=1$, we will define a linear operator $f\to\tilde f$ from $\ell^1(\Z_T)$ to $\ell^1(\Z_{pT})$ which preserves joint distribution of functions, such that on long blocks each $\tilde f$ is identical to a translate of $f$.
\\
\\In our particular cases, where $\Lambda_T$ consists of the residues of $d$th powers or the integers relatively prime to $T$, we can simply define
\begin{eqnarray*}
\tilde f(x)&:=&\left\{\begin{array}{ll}f(y) & x\in y+p\Z, \;0\leq y< T\cdot\lfloor p/T\rfloor \\
f(x) & \text{otherwise, }\end{array}\right.
\end{eqnarray*}
and prove directly that for every $x\in\Z_{pT}$,
\begin{eqnarray*}
\frac1{|\Lambda_{pT}|}\sum_{a\in\Lambda_{pT}}\tilde f(x+a)\geq\frac12\E f.
\end{eqnarray*}
(For the former case, we would use the P\'{o}lya-Vinogradov inequality on character sums; for the latter we would use Dirichlet's theorem.)
\\ \\ However, we lack such tools when considering more general sequences, so we shall instead use an external randomization in our construction of $\tilde f$. Let $\Omega$ be a probability space and $\xi_i(\omega)$ be independent random variables on $\Omega$, each with a uniform distribution on the discrete set $\{0,\dots, T-1\}$.  Then for any $f\in\ell^1(\Z_T)$ and $\omega\in\Omega$ we define (on the interval $[0,pT)$) the function
\begin{eqnarray}
\label{rearrange}
\tilde f^\omega(x) &:=& \left\{\begin{array}{ll}f(x+\xi_i(\omega)) & (i-1)\lfloor\sqrt p\rfloor T\leq x \leq i\lfloor\sqrt p\rfloor T, 1\leq i \leq\sqrt p; \\
f(x) & (\lfloor\sqrt p \rfloor)^2T\leq x<pT.\end{array}\right.
\end{eqnarray}
Heuristically speaking, we break $\Z_{pT}$ into blocks of size $T\lfloor\sqrt p\rfloor$ and shift each by a random variable $\xi_i(\omega)$.  The point of this is that, although we cannot directly prove that $\Lambda_{pT}$ will be equidistributed in the residue classes modulo $T$, there must exist some value of these shifts under which this is approximately so.
\\ \\
Similarly, for a set $E\subset \Z_T$ we can define $\tilde E^\omega:=\text{supp }\tilde\1_E^\omega\subset \Z_{pT}$.  Note that $\tilde E^\omega$ contains exactly $\lfloor\sqrt p\rfloor |E|$ points in each full block, and $(p-(\lfloor\sqrt p \rfloor)^2)|E|$ points in the last block; thus we see that $\P(\tilde E^\omega)=\P(E)$ for all $E\subset\Z_T$ and all $\omega\in\Omega$, from which it follows that this periodic rearrangement preserves the joint distribution of any collection of functions.
\\ \\For most $x\in \Z_{pT}$ there exists an $\tilde x$ such that $\tilde f^\omega(x+z)= f(\tilde x +z)$ for $0\leq z \ll \sqrt p$.  In particular, if we take our original exceptional set $E\subset\Z_T$, we can define a new exceptional set
\begin{eqnarray}
\label{tildeE}
E^1(\omega):= \tilde E^\omega \cup\bigcup_{0\leq i\leq\frac{\sqrt p}T -1} [iT\lfloor\sqrt p\rfloor -\psi(AT),iT\lfloor\sqrt p\rfloor]+p\Z
\end{eqnarray}
such that (\ref{joker}) and (\ref{thief}) are still satisfied for $\tilde f_h^{L,\omega}$, $\tilde X_h^{L,\omega}$ and $\tilde Q_{x,L}^\omega$ off of $E^1(\omega)$.  Furthermore,
$$\P(E^1(\omega))\leq\P(E)+\frac{\psi(AT)+2T}{T\sqrt p}\leq \delta/2$$
for $p$ sufficiently large.
\\ \\Now we can prove the following lemma:
\begin{lemma}
\label{wildcat}
For $p$ sufficiently large with $(p,T)=1$, there exists $\omega\in\Omega$ such that for all $0\leq f\in\ell^1(\Z_T)$ and all $x\in\Z_{pT}$,
\begin{eqnarray*}
\frac1{|\Lambda_{pT}|}\sum_{a\in\Lambda_{pT}}\tilde f^\omega(x+a)\geq\frac12\E f.
\end{eqnarray*}
\end{lemma}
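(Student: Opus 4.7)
The strategy is probabilistic: we will show that for $p$ large, some $\omega$ simultaneously satisfies the claimed inequality for every $x$ and $f$. First, since $f \geq 0$ and both sides depend linearly on $f$, any such $f \in \ell^1(\Z_T)$ is a non-negative combination of indicators $\1_{\{s\}}$, $s \in \Z_T$. Since $\E \1_{\{s\}} = 1/T$, it suffices to prove that for every $x \in \Z_{pT}$ and $s \in \Z_T$,
\[
S(x,s,\omega) := \sum_{a \in \Lambda_{pT}} \tilde{\1_{\{s\}}}^\omega(x+a) \geq \frac{|\Lambda_{pT}|}{2T}.
\]

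Let $I_i := [(i-1)T\lfloor\sqrt p\rfloor, iT\lfloor\sqrt p\rfloor)$ denote the $i$th shifted block and set $N_i(x,r) := |\{a \in \Lambda_{pT} : x+a \in I_i,\; x+a \equiv r \pmod T\}|$. Since $\tilde{\1_{\{s\}}}^\omega(x+a) = 1$ precisely when $x+a \in I_i$ and $x+a+\xi_i(\omega) \equiv s \pmod T$, dropping the unshifted tail yields $S(x,s,\omega) \geq \sum_{i=1}^{\lfloor\sqrt p\rfloor} N_i(x,\, s-\xi_i(\omega) \bmod T)$. Each $N_i(x,r) \leq \lfloor\sqrt p\rfloor$, since each residue class mod $T$ contributes exactly $\lfloor\sqrt p\rfloor$ integers per block.

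Because $\xi_i$ is uniform on $\{0,\dots,T-1\}$, the conditional expectation is $\E[N_i(x,s-\xi_i)] = T^{-1}|\Lambda_{pT} \cap (I_i-x)|$; summing and noting that $\bigcup_i I_i$ omits at most $2T\sqrt p$ integers in $[0,pT)$ yields
\[
\E[S] \,\geq\, \frac{|\Lambda_{pT}|}{T} - 2\sqrt p \,\geq\, \frac{3}{4}\cdot\frac{|\Lambda_{pT}|}{T}
\]
as soon as $p\,\P(\Lambda_{pT}) \geq 8\sqrt p$. Since the $\xi_i$ are independent and each $N_i(x, s-\xi_i)$ lies in $[0,\sqrt p]$, Hoeffding's inequality applied to the sum of $\lfloor\sqrt p\rfloor$ independent summands gives
\[
\P\bigl(S(x,s,\omega) < |\Lambda_{pT}|/(2T)\bigr) \,\leq\, \exp\bigl(-c\,\sqrt p\,\P(\Lambda_{pT})^2\bigr)
\]
for an absolute $c > 0$. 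A union bound over the $pT^2$ pairs $(x,s)$ caps the total failure probability at $pT^2 \exp(-c\sqrt p\, \P(\Lambda_{pT})^2)$, which tends to $0$ as $p \to \infty$ whenever $\P(\Lambda_{pT})$ is bounded below.

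The main technical obstacle is this uniform lower bound on $\P(\Lambda_{pT}) = \P(\Lambda_p)\P(\Lambda_T)$: while $\P(\Lambda_T)$ is a fixed positive constant, $\P(\Lambda_p)$ could a priori be arbitrarily small. In the intended applications $p$ is drawn from the auxiliary sequence $\{p_j\}$ of Theorem \ref{jimi}, so hypothesis (\ref{howl}) supplies $\inf_j \P(\Lambda_{p_j}) > 0$ and the failure probability above decays superpolynomially in $p$. Picking any $\omega$ outside this failure event then proves the lemma.
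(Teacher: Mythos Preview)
Your argument is correct and follows essentially the same route as the paper: reduce by linearity to singleton indicators, bound the contribution from each block, compute the mean, apply a concentration inequality to the independent block sums, and finish with a union bound over the $pT^2$ pairs $(x,s)$. The only difference is that the paper normalizes the block counts to $[0,1]$ and applies Chernoff's inequality (obtaining a tail of order $\exp(-c\sqrt p\,\P(\Lambda_{pT}))$), whereas you apply Hoeffding directly to the $[0,\sqrt p]$-valued summands (obtaining $\exp(-c\sqrt p\,\P(\Lambda_{pT})^2)$); since $\P(\Lambda_{pT})$ is bounded below via (\ref{howl}), both bounds decay fast enough to beat the $pT^2$ union-bound factor.
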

\begin{proof}
By the linearity of the periodic rearrangement, it suffices to prove that for some $\omega$, this holds for all characteristic functions of singletons in $\Z_T$: thus it is enough to show that for all $x\in\Z_{pT}$ and $b\in\Z_T$,
\begin{eqnarray*}
\P_\omega\left(\sum_{a\in\Lambda_{pT}}\tilde 1_{\{b\}}^\omega(x+a)<\frac{|\Lambda_{pT}|}{2T}\right)<\frac1{pT^2}.
\end{eqnarray*}
Fix $b$ and $x$; then counting only the main blocks,
\begin{eqnarray*}
\sum_{a\in\Lambda_{pT}}\tilde 1_{\{b\}}^\omega(x+a)&\geq& \sum_{1\leq i\leq\sqrt p}|\{a\in\Lambda_{pT}:(i-1)\lfloor\sqrt p\rfloor T\leq x+a< i\lfloor\sqrt p\rfloor T, \; x+a+\xi_i(\omega)-b \in T\Z\}|\\
&=&\sum_i |\Lambda_{pT}\cap P_i(\omega)|,
\end{eqnarray*}
where $P_i(\omega):=\left\{[(i-1)\lfloor\sqrt p\rfloor T-x,i\lfloor\sqrt p\rfloor T-x)\cap\{b-x-\xi_i+T\Z\}\right\}$ is an arithmetic progression in $\Z_{pT}$.  If we then define
 $$\nu_i(\omega):=p^{-1/2}|\Lambda_{pT}\cap P_i(\omega)|,$$ 
 we see that $0\leq \nu_i\leq1$, that the $\nu_i$ are independent, and that each $z\in \Lambda_{pT}\cap[(i-1)\lfloor\sqrt p\rfloor T-x,i\lfloor\sqrt p\rfloor T-x)$ contributes to $\nu_i(\omega)$ for precisely one value of $\xi_i(\omega)$.  Therefore
\begin{eqnarray*}
\E_\omega\nu_i(\omega)&=& \frac1{T\sqrt p}|\Lambda_{pT}\cap[(i-1)\lfloor\sqrt p\rfloor T-x,i\lfloor\sqrt p\rfloor T-x)|,\\
\Var_\omega\nu_i(\omega)&\leq& \E_\omega\nu^2_i(\omega)\leq \E_\omega\nu_i(\omega).
\end{eqnarray*}
Recall that $\displaystyle \inf_j \P(\Lambda_{p_j})>0$, so that for $p\gg T$, $|\Lambda_{pT}|=|\Lambda_p|\cdot|\Lambda_T|\gg T\sqrt p$.  Thus for $p$ sufficiently large,
\begin{eqnarray*}
\E_\omega(\sum_i\nu_i(\omega))&\geq& \frac1{T\sqrt p}|\Lambda_{pT}\setminus[x-T(p-(\lfloor\sqrt p\rfloor)^2),x]|>\frac{3|\Lambda_{pT}|}{4T\sqrt p}\\
\Var_\omega(\sum_i \nu_i(\omega))&\leq& \frac{|\Lambda_{Tp}|}{T\sqrt p}
\end{eqnarray*}
and we may apply Chernoff's Inequality (Theorem 1.8 from \cite{TV}) to find
\begin{eqnarray*}
\P_\omega\left(\sum_{a\in\Lambda_{pT}}\tilde 1_{\{b\}}^\omega(x+a)<\frac{|\Lambda_{pT}|}{2T}\right)&\leq&\P_\omega\left(\sum_i |\Lambda_{pT}\cap P_i(\omega)|<\frac{|\Lambda_{pT}|}{2T}\right)\\
&=&\P_\omega\left(\sum_i\nu_i(\omega)<\frac{|\Lambda_{pT}|}{2T\sqrt p}\right)\\
&\leq&\P_\omega\left(\sum_i\nu_i(\omega)-\E_\omega \nu_i>\frac{|\Lambda_{pT}|}{4T\sqrt p}\right)\\
&\leq&2\exp(-\frac{|\Lambda_{pT}|}{64T\sqrt p})< \frac1{pT^2}
\end{eqnarray*}
for $p$ sufficiently large (depending on $T$).
\end{proof}
 
\section{Defining $f_K^{L+1}$}
\noindent Using the properties (\ref{wind})-(\ref{earth}), we now take $p=p_{i(K,L)}$ and $q=q_{j(K,L)}$ from our auxiliary sequences such that they are relatively prime to each other and to $T_L$, $R_L$ and $D$, such that $p$ is large enough for Lemma \ref{wildcat} and such that $q\gg D$, $s_q>8\delta^{-1} \psi(AT)$ and
\begin{eqnarray*}
&\P((-\Lambda_q)^\gamma)=\P(\Lambda^\gamma_q)\geq \gamma-\varepsilon,&\\
&\left|\left\{(u,v)\in\Lambda_q\times\Lambda_q:|u-v-w|>\gamma s_q\;\forall w\in\Lambda_q\right\}\right| \geq 5\alpha|\Lambda_q|^2.&
\end{eqnarray*}
We would like to define $f_K^{L+1}$ and $X_K^{L+1}$ as in (\ref{life}) and (\ref{joke}), but then $X_K^{L+1}$ will not precisely equal $Y_{L+1,\gamma,\alpha}$ in distribution.  This is the reason we will make the $X_h^{L+1}$ periodic by $R_{L+1}T_{L+1}$ rather than just $T_{L+1}$: we will later multiply the parts of $X_K^{L+1}$ by the characteristic function of intervals whose lengths are appropriate multiples of $T_{L+1}$.  It will be essential (for its use in later inductive steps) that we keep $(R_{L+1},D)=1$, and for this we will need to define $f_K^{L+1}$ and $X_K^{L+1}$ in a more complicated fashion.
\\ \\ First, we will let
\begin{eqnarray}
\label{alright}
\Phi_q&:=&\left\{u\in -\Lambda_q: \left|\{v\in\Lambda_q:u+v\notin-\Lambda_q+(-\gamma s_q,\gamma s_q)\}\right|\geq2\alpha|\Lambda_q| \right\}
\end{eqnarray}
and note that $|\Phi_q|\geq 3\alpha|\Lambda_q|.$  We then consider $\Phi_q^\gamma=\Phi_q+(0,s_q\gamma)\subset (-\Lambda_q)^\gamma$.  Now $\P(\Phi_q^\gamma)\geq 2\alpha\gamma$ since
\begin{eqnarray*}
\gamma-\varepsilon\leq \P((-\Lambda_q)^\gamma)\leq \P(\Phi_q^\gamma)+q^{-1}\gamma s_q|-\Lambda_q\setminus \Phi_q| \leq \P(\Phi_q^\gamma) + \gamma(1-3\alpha)
\end{eqnarray*}
and we have stipulated that $\varepsilon\leq\alpha\gamma$. \\ \\
Now we choose two sets $\Psi_q,\Delta_q\subset \Z_q$ such that
\begin{eqnarray}
\label{second}
&\Psi_q\subset \Phi_q^\gamma&\\
\label{late}
&\P(\Psi_q)\geq \alpha\gamma&\\
\label{hour}
&\Z_q\setminus (-\Lambda_q)^\gamma \subset \Delta_q \subset \Z_q\setminus \Phi_q&\\
&\P(\Delta_q)<1-\P(\Lambda_q^\gamma)+\delta/8&\\
\label{minute}
&(|\Psi_q|,D)=(|\Delta_q|,D)=1.&
\end{eqnarray}
(As $q\gg D$, this last condition is clearly possible to satisfy simultaneously with the others.)  We observe that
\begin{eqnarray}
\label{falsely}
\left|\{v\in\Lambda_q:x+v\in \Delta_q\}\right|&\geq&2\alpha|\Lambda_q|\qquad\forall x\in\Psi_q,
\end{eqnarray}
and accordingly we define
\begin{eqnarray*}
\label{relief}
f_K^{L+1}&:=&(1-\gamma)^{-1}\tilde f_K^L\1_{\Delta_q}\in\ell^1(\Z_{qpT}).
\end{eqnarray*}
Note that $(q,pT)=1$ implies $\E f_K^{L+1}=(1-\gamma)^{-1}\P(\Delta_q)\E \tilde f_K^L=(1-\gamma)^{-1}\P(\Delta_q)\E f_K^L$; since $\gamma<1/2$ and $\delta<8\varepsilon$, this means
\begin{eqnarray}
\label{growl}
\E f_K^L\leq \E f_K^{L+1}\leq(1+4\varepsilon)\E f_K^L.
\end{eqnarray}
\noindent The goal of Lemma \ref{wildcat} and the definition of $\Psi_q$ is the following lemma:

\begin{lemma}
\label{fate}
Let $T=T_L$, with $p$ and $q$ chosen as above.  For all $x\in\Psi_q$ and for any $Q\in\Qset$ such that $qpT\mid Q$,
\begin{eqnarray*}
\frac1{|\Lambda_{Q}|}\sum_{a\in\Lambda_{Q}} f_K^{L+1}(x+a)&\geq&\alpha.
\end{eqnarray*}
\end{lemma}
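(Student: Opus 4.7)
The plan is to decompose the sum over $\Lambda_Q$ via the Chinese Remainder Theorem, exploiting the factorization $\Lambda_Q = \Lambda_q \cap \Lambda_{pT} \cap \Lambda_r$ (where $r := Q/(qpT)$) granted by the hypothesis $\Lambda_{st} = \Lambda_s \cap \Lambda_t$ from Theorem \ref{jimi}. This lets us isolate the two different structural roles played by $\Delta_q$ and $\tilde f_K^L$: the first depends only on the $q$-coordinate, and the second only on the $pT$-coordinate.

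More precisely, I would first write
\begin{equation*}
\sum_{a\in\Lambda_Q} f_K^{L+1}(x+a) \;=\; (1-\gamma)^{-1}\sum_{v\in\Lambda_q} \1_{\Delta_q}(x+v) \sum_{b\in\Lambda_{pT}} \tilde f_K^L(x+b) \sum_{c\in\Lambda_r} 1,
\end{equation*}
where I use CRT to regard $a\in\Lambda_Q$ as a triple $(v,b,c)$, together with the fact that $\tilde f_K^L$ is $pT$-periodic and $\1_{\Delta_q}$ is $q$-periodic. The trailing factor collapses to $|\Lambda_r|$.

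Next I would invoke (\ref{falsely}): since $x \in \Psi_q$, the set $V_x := \{v\in\Lambda_q: x+v\in\Delta_q\}$ satisfies $|V_x|\geq 2\alpha|\Lambda_q|$, so the $v$-sum is at least $2\alpha|\Lambda_q|$ in the support. For the $b$-sum, Lemma \ref{wildcat} (applied to the $\omega$ already fixed when constructing $\tilde f_K^L$) gives, for the projection $\bar x$ of $x$ to $\Z_{pT}$,
\begin{equation*}
\sum_{b\in\Lambda_{pT}} \tilde f_K^L(\bar x + b) \;\geq\; \tfrac12 |\Lambda_{pT}|\, \E f_K^L \;\geq\; \tfrac12|\Lambda_{pT}|,
\end{equation*}
where the final inequality uses property \textbf{(1)} of Step $(K,M,L)$, namely $\E f_K^L\geq 1$. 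Assembling these bounds and using the factorization $|\Lambda_Q| = |\Lambda_q|\,|\Lambda_{pT}|\,|\Lambda_r|$ yields
\begin{equation*}
\frac{1}{|\Lambda_Q|}\sum_{a\in\Lambda_Q} f_K^{L+1}(x+a) \;\geq\; (1-\gamma)^{-1}\cdot 2\alpha\cdot\tfrac12 \;=\; (1-\gamma)^{-1}\alpha \;\geq\; \alpha.
\end{equation*}

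There is no single hard step; the argument is a bookkeeping exercise that hinges on having chosen $\Psi_q$ and $\Delta_q$ so that (\ref{falsely}) holds, and on Lemma \ref{wildcat} having been set up to furnish a uniform pointwise lower bound (rather than merely an average one). The only place where care is needed is making sure the CRT factorization actually decouples the two indicator factors — which it does precisely because $(q,pT)=1$ and because $\Lambda_{pT}$ depends only on the $pT$-coordinate of $a$ once $\Lambda_Q$ has been intersected with $\Lambda_q$ on the $q$-coordinate.
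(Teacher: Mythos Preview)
Your proof is correct and follows essentially the same route as the paper: a CRT decomposition of $\Lambda_Q$ into its $q$-, $pT$-, and $Q/(qpT)$-components (the paper calls the last factor $B$ rather than $r$), followed by (\ref{falsely}) for the $q$-sum, Lemma~\ref{wildcat} for the $pT$-sum, and the trivial cancellation of $|\Lambda_r|$. The only cosmetic difference is that you retain the $(1-\gamma)^{-1}$ factor to the end, whereas the paper discards it at the first inequality since $(1-\gamma)^{-1}\geq 1$.
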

\begin{proof}
Let $B:=\frac{Q}{qpT}$.  Since $Q\in\Qset$ is squarefree, we see that $B, T, p$ and $q$ are pairwise relatively prime, and
\begin{eqnarray*}
\frac1{|\Lambda_{Q}|}\sum_{a\in\Lambda_{Q}} f_K^{L+1}(x+a)&=&\frac1{|\Lambda_B|}\sum_{w\in\Lambda_B}\frac1{|\Lambda_{qpT}|}\sum_{z\in\Lambda_{qpT}} f_K^{L+1}(x+z)\\
&=&\frac1{|\Lambda_{pT}|}\frac1{|\Lambda_q|}\sum_{u\in\Lambda_{pT}} \sum_{v\in\Lambda_q} (1-\gamma)^{-1}\tilde f_K^L(x+u)\1_{\Delta_q}(x+v)\\
&\geq&\frac {2\alpha}{|\Lambda_{pT}|}\sum_{u\in\Lambda_{pT}}\tilde f_K^L(x+u)\\
&\geq& \alpha \E f_K^L\geq \alpha
\end{eqnarray*}
using (\ref{falsely}) for the first inequality and Lemma \ref{wildcat} for the second.
\end{proof}

\noindent We also define an additional exceptional set
\begin{eqnarray}
\label{hatE}
E_L^2&:=&\{x\in\Delta_q: \exists 0<y\leq \psi(AQ_{x,L})\text{ such that } x+y\notin\Delta_q\}.
\end{eqnarray}
Note that $E_L^2\subset \{\Delta_q\cap(-\Lambda_q)^\gamma\}\cup\{-\Lambda_q+(-\psi(AT),0]\}$.  By our choice of $q$, we see that $$\P(E_L^2)\leq \delta/8+|\Lambda_q|\psi(AT)\leq\frac\delta4.$$  

\section{Restricting a Family to $(-\Lambda_q)^\gamma$}

\noindent As noted in the heuristic outline, if we wish to change $Q_x$ on the set $(-\Lambda_q)^\gamma$, we must change $f_1^L,\dots,f_{K-1}^L$ and $X_1^L,\dots,X_{K-1}^L$ as well, since these need no longer satisfy \textbf{(4)} with the new value of $Q_x$.  In order to find suitable replacement functions on $(-\Lambda_q)^\gamma$, we will use take a Step $(K-1,M,M)$ family $\{S,R', g_1,\dots,g_{K-1},Z_1,\dots,Z_{K-1},E',Q'_x\}$ with suitable parameters, and then restrict the functions $g_h$ to the set $(0,\gamma s_q)+q\Z$ (multiplying them by $|\Lambda_q|$ so that their $\ell^1$ norm is $\gamma$ times its previous value).  The averages along $\Lambda_q$ starting at any $x\in(-\Lambda_q)^\gamma$ will sample from this set uniformly often, so that we may be able to preserve property \textbf{(4)} there.
\\
\\This restriction will not preserve the independence of the $Z_h$ as such, since the averages of the restricted $g_h$ must be 0 off of $(-\Lambda_q)^\gamma$.  However, we may define the conditional expectations $\E(X\mid \Sigma)$; and these will remain independent, for $\Sigma\in(-\Lambda_q)^\gamma$.
\begin{definition}
Let $X$ be a random variable $X$ on a discrete probability space $(\Omega,\P)$, and let $\Sigma\subset\Omega$ with $\P(\Sigma)>0$.  We define the \emph{conditional expectation} $\E(X\mid \Sigma)$ to be the random variable on $\Sigma$ with $\E(X\mid \Sigma)(x)=X(x)\;\forall x\in\Sigma$, where $\Sigma$ is equipped with the probability measure $\P_\Sigma(x)=\P(\Sigma)^{-1}\P(x)$.
\end{definition}
Now we can state the actual form of this restriction for an entire Step $(K,M,L)$ family:
\begin{lemma}
\label{watchtower}
Let $\{T,R,f_1,\dots,f_K,X_1,\dots,X_K,E,Q_x\}$ satisfy the properties of Step $(K,M,L)$ for the parameters $A,\delta,D$.  Say we have $q,B\in\Qset$ with $q,B$ and $T$ pairwise relatively prime, and $qB\leq A$.  Let 
\begin{eqnarray*}
\Xi^\gamma_q&:=&(0,\gamma s_q)+q\Z\\
\bar f_h(x)&:=& |\Lambda_q|f_h(x)\1_{\Xi^\gamma_q}(x)\\
\bar X_h(x)&:=&X_h(x)\1_{(-\Lambda_q)^\gamma}(x)\\
\bar E&:=& E\cap(-\Lambda_q)^\gamma\\
\bar Q_x&:=&qBQ_x\\
\bar A&:=& \frac{A}{qB}\\
\bar D&:=&\frac D{(D,qB)}\\
\bar T&:=& qBT.
\end{eqnarray*}
Then $(\bar T,D)=1$ and
\begin{description}
\item[($\mathbf{\bar1}$)] $\bar f_1,\dots,\bar f_K\in\ell^1(\Z_{\bar T})$ with $\bar f_h\geq0$ and $\E \bar f_h\leq\gamma\E f_h$
\item[($\mathbf{\bar2}$)]  $\bar X_1,\dots,\bar X_K\in\ell^1(\Z_{R\bar T})$ such that for any nonempty $q$-periodic $\Sigma\subset(-\Lambda_q)^\gamma$, $\E(\bar X_1\mid\Sigma),\dots,\E(\bar X_K\mid\Sigma)$ are independent and $\E(\bar X_h\mid\Sigma)\deq X_h$.
\item[($\mathbf{\bar3}$)] $\bar E\subset \Z_{\bar T}$ with $\P(\bar E)\leq \delta \P((-\Lambda_q)^\gamma)$
\item[($\mathbf{\bar4}$)] $\bar Q_x:\Z_{\bar T}\to\N$ such that $\bar Q_x\mid \bar T\;\forall x\in\Z_{\bar T}$ and for each $x\not\in \bar E$,
\begin{eqnarray*}
\frac1{|\Lambda_{\bar Q_x}|}\sum_{a\in\Lambda_{\bar Q_x}} \bar f_h(x+a)\geq \bar X_h(x)&&\forall 1\leq h\leq K
\end{eqnarray*}
and 
\begin{eqnarray*}
\bar f_h(x+y-Q_x)=\bar f_h(x+y)&& \forall1\leq h\leq K,\; \bar Q_x\leq y\leq \psi(\bar A\bar Q_x).
\end{eqnarray*}
\end{description}
\end{lemma}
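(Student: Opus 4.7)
The plan is to verify each of $(\bar 1)$--$(\bar 4)$ as a CRT-style computation exploiting the pairwise coprimality of $q$, $B$, and $T$ (and of $R$ with $qB$, which is needed to make sense of $(\bar 2)$ and is implicit in the hypothesis). The guiding principle throughout: on $\Z_{\bar T}=\Z_{qBT}$, any $T$-periodic object and any $q$-periodic object decouple via CRT, and similarly for $RT$-periodic versus $q$-periodic objects on $\Z_{R\bar T}$.

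Properties $(\bar 1)$ and $(\bar 3)$ will follow directly from this independence. Using $|\Lambda_q|s_q = q$, I expect
\[
\E\bar f_h \;=\; |\Lambda_q|\,\P(\Xi_q^\gamma)\,\E f_h \;=\; \gamma\,\E f_h,
\]
and $\P(\bar E) = \P(E)\,\P((-\Lambda_q)^\gamma)\le\delta\,\P((-\Lambda_q)^\gamma)$. For $(\bar 2)$, since $\bar X_h$ agrees with $X_h$ on $(-\Lambda_q)^\gamma$, for any $q$-periodic $\Sigma\subset(-\Lambda_q)^\gamma$ the conditional expectation $\E(\bar X_h\mid\Sigma)$ is just $X_h$ restricted to $\Sigma$; since $(q,RT)=1$, the joint distribution of the restricted variables on $\Sigma$ coincides with that of the $X_h$ on $\Z_{RT}$, so pairwise independence and the prescribed marginals survive.

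The main obstacle is property $(\bar 4)$. Fix $x\in(-\Lambda_q)^\gamma\setminus\bar E$; since $\bar E=E\cap(-\Lambda_q)^\gamma$, also $x\notin E$, so (\ref{joker}) applies. By CRT, $\Lambda_{\bar Q_x}\cong\Lambda_q\times\Lambda_B\times\Lambda_{Q_x}$, and each $\bar a\in\Lambda_{\bar Q_x}\cap[1,\bar Q_x]$ corresponds to a unique triple $(a_q,a_B,a_{Q_x})$. The factor $\1_{\Xi_q^\gamma}(x+\bar a)$ in $\bar f_h$ forces $a_q\in M_x:=\{\lambda\in\Lambda_q:x+\lambda\in[0,\gamma s_q)\pmod q\}$, and $|M_x|\ge 1$ because $x\in(-\Lambda_q)^\gamma$. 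The crucial use of (\ref{thief}) comes next: since $\bar a\le\bar Q_x=qBQ_x\le AQ_x\le\psi(AQ_x)$, iterated application of (\ref{thief}) (up to $qB$ times, each step keeping $y$ inside $[Q_x,\psi(AQ_x)]$) gives $f_h(x+\bar a)=f_h(x+(\bar a\bmod Q_x))$, so the integrand depends only on $a_{Q_x}$. The sum over $a_B\in\Lambda_B$ then collapses to $|\Lambda_B|$ identical copies, producing
\[
\frac{1}{|\Lambda_{\bar Q_x}|}\sum_{\bar a\in\Lambda_{\bar Q_x}}\bar f_h(x+\bar a) \;=\; |M_x|\cdot\frac{1}{|\Lambda_{Q_x}|}\sum_{\mu\in\Lambda_{Q_x}}f_h(x+\mu)\;\ge\;|M_x|\,X_h(x)\;\ge\;\bar X_h(x).
\]
The periodicity clause of $(\bar 4)$ (read as $\bar Q_x$-periodicity, since $\bar Q_x=qBQ_x$ is the natural period of $\bar f_h$) follows from the $q$-periodicity of $\1_{\Xi_q^\gamma}$ combined with iterated (\ref{thief}) for $f_h$, using $\bar A\bar Q_x=AQ_x$ so that the required stretch coincides with the one on which the original (\ref{thief}) holds.
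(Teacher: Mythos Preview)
Your argument is correct and follows essentially the same route as the paper: the CRT decoupling of $q$-, $B$-, and $Q_x$-periodic pieces handles $(\bar1)$--$(\bar3)$ immediately, and for $(\bar4)$ you factor $\Lambda_{\bar Q_x}\cong\Lambda_q\times\Lambda_B\times\Lambda_{Q_x}$, drop the $B$-component, use $x\in(-\Lambda_q)^\gamma$ to guarantee $|M_x|\ge1$, and then invoke (\ref{joker}). The paper's display performs exactly this factorization, only more tersely. Two small remarks: your equality $|\Lambda_q|\,\P(\Xi_q^\gamma)=\gamma$ should strictly be $\le\gamma$ since $\gamma s_q$ need not be an integer (this matches the paper's stated inequality $\E\bar f_h\le\gamma\,\E f_h$); and your explicit appeal to (\ref{thief}) to reduce $f_h(x+\bar a)$ to $f_h(x+(\bar a\bmod Q_x))$ is in fact what the paper's terse factorization is silently using---the paper writes $\bar f_h(x+z)$ with $z\in\Lambda_{qQ_x}$ as if $\bar f_h$ were $qQ_x$-periodic, which is only true on the relevant interval because of (\ref{thief}). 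You are also right to read the periodicity clause as $\bar Q_x$-periodicity (the $Q_x$ in the displayed statement is a typo, since $\1_{\Xi_q^\gamma}$ is not $Q_x$-periodic); the paper's one-line justification ``$\bar A\bar Q_x=AQ_x$ so (\ref{thief}) implies the last claim trivially'' agrees with your iterated application.
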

\begin{proof}
Since $(q,T)=1$, any $q$-periodic set contains an equal portion of integers from each residue class modulo $T$.  This fact quickly implies properties $\mathbf{(\bar1)}$-$\mathbf{(\bar3)}$, noting that the joint distribution of the $X_h$ on any $q$-periodic $\Sigma\subset(-\Lambda_q)^\gamma$ is the same as their joint distribution on $\Z_{RT}$.  Now for property $\mathbf{(\bar4)}$, take $x\in(-\Lambda_q)^\gamma\setminus \bar E$ (since it is trivial otherwise).  Since $Q_x$, $q$, and $B$ are relatively prime,
\begin{eqnarray*}
\frac1{|\Lambda_{\bar Q_x}|}\sum_{a\in\Lambda_{\bar Q_x}} \bar f_h(x+a)&=&\frac1{|\Lambda_B|}\sum_{u\in\Lambda_B}\frac1{|\Lambda_{qQ_x}|}\sum_{z\in \Lambda_{qQ_x}} \bar f_h(x+z)\\
&=& \frac1{|\Lambda_q|}\sum_{v\in \Lambda_q}|\Lambda_q| \1_{\Xi^\gamma_q}(x+v)\cdot\frac1{|\Lambda_{Q_x}|}\sum_{w\in\Lambda_{Q_x}} f_h(x+w)\\
&\geq& \frac1{|\Lambda_{Q_x}|}\sum_{w\in\Lambda_{Q_x}} f_h(x+w) \geq X_h(x)=\bar X_h(x)
\end{eqnarray*}
since if $x\in(-\Lambda_q)^\gamma$, there must exist some $v\in\Lambda_q$ such that $x+v\in\Xi^\gamma_q$.
\\ \\
Finally, $\bar A\bar Q_x= AQ_x$ so (\ref{thief}) implies the last claim trivially.
\end{proof}

\section{Completion of the Inductive Step}

\noindent Now we are ready to define the other functions and prove Step $(K,M,L+1)$; but since so much goes into this step, we will show the origins of the various pieces.
\\
\\ We began with $\{T_L,R_L,f_1^L,\dots,f_K^L,X_1^L,\dots,X_K^L,E_L, Q_{x,L}\}$ satisfying \textbf{(1)-(4)} with the parameters $A,\delta/4,$ and $D$.  We modify these objects in several ways, using a new $p=p_{i(K,L)}$ and $q=q_{j(K,L)}$ chosen in Section 6.
\\
\\ First, we applied the $p$-periodic rearrangement $f\to\tilde f^\omega$ defined in (\ref{rearrange}) to the functions $f_h^L$ and $X_h^L$, with $\omega$ chosen as in Lemma \ref{wildcat}, and defined an associated exceptional set $E_L^1(\omega)$ in (\ref{tildeE}).  We defined sets $\Delta_q, \Psi_q\subset \Z_q$ satisfying (\ref{second})-(\ref{falsely}), then we defined the function $f_K^{L+1}$ in (\ref{relief}) and an additional exceptional set $E_L^2$ in (\ref{hatE}).
\\
\\
We are now ready to proceed.
\\
\\Set $A_L:=AT_Lpq$ and $D_L:=DT_Lpq$.  By our strong inductive hypothesis, we may assume Step $(K-1,M,M)$ and thus construct a family $\{S,R', g_1,\dots,g_{K-1},Z_1,\dots,Z_{K-1},E',Q'_x\}$ satisfying \textbf{(1)-(4)} with parameters $A_L, \delta/4,D_L$.  (In particular, $S$ and $R'$ are each relatively prime to $T_L$, $p$, $q$, and $D$.)
\\
\\ Applying Lemma \ref{watchtower} with $B=Tp$, we obtain $\{\bar S, R', \bar g_1,\dots,\bar g_{K-1},\bar Z_1,\dots,\bar Z_{K-1},\bar E',\bar Q'_x\}$ on $(-\Lambda_q)^\gamma$ satisfying $\mathbf{(\bar1)}$-$\mathbf{(\bar4)}$ with the parameters $\bar A_L=A,\delta/4,$ and $\bar D_L=D$.
\\
\\ Then we define
\begin{eqnarray}
T_{L+1}&:=&\bar S=ST_Lpq\\
E_{L+1}&:=& E_L^1(\omega)\cup E_L^2\cup\bar E'\\
Q_{x,L+1}&:=&\left\{\begin{array}{ll}\tilde Q_{x,L},& x\in\Delta_q \\ \bar Q'_x, & x\not\in\Delta_q,\end{array}\right.
\end{eqnarray}
and for $1\leq h\leq {K-1}$,
\begin{eqnarray}
f_h^{L+1}&:=&\tilde f_h^L\1_{\Delta_q}+\bar g_h\1_{\Z\setminus\Delta_q}\in\ell^1(\Z_{T_{L+1}})\\
X_h^{L+1}&:=&\tilde X_h^L\1_{\Delta_q}+\bar Z_h\1_{\Z\setminus\Delta_q}\in\ell^1(\Z_{T_{L+1}}).
\end{eqnarray}

\noindent We have already defined
\begin{eqnarray*}
f_K^{L+1}&:=&(1-\gamma)^{-1}\tilde f_K^L\1_{\Delta_q}\in\ell^1(\Z_{T_{L+1}}).
\end{eqnarray*}
\noindent It thus remains to define $X_K^{L+1}$.  As noted before, we cannot simply define it as in (\ref{joke}); we must reduce it slightly so that it equals $Y_{L+1,\gamma,\alpha}$ in distribution.
\\
\\Recall that $\P(\Delta_q)\geq1-\gamma$ and $\P(\Psi_q)> \alpha\gamma$; we have taken $\gamma, \alpha$ to be dyadic rationals and assumed (\ref{minute}), so we may write 
\begin{eqnarray}
\label{soul}
\frac{1-\gamma}{\P(\Delta_q)}=\frac sr,\qquad\frac{\alpha\gamma}{\P(\Psi_q)}=\frac tr
\end{eqnarray}
with $(r,D)=1$.  (Recall that $D$ is odd.)  Everything so far is periodic with period $T_{L+1}R_LR'$, so if we define 
\begin{eqnarray*}
R_{L+1}&:=&R_LR'r\\
\Gamma_s&:=& [0,sR'R_LT_{L+1})+R_{L+1}T_{L+1}\Z\\
\Gamma_t&:=& [0,tR'R_LT_{L+1})+R_{L+1}T_{L+1}\Z\\
X_K^{L+1}&:=& (1-\gamma)^{-1}\tilde X_K^L\1_{\Delta_q}\1_{\Gamma_s}+ \alpha\1_{\Psi_q}\1_{\Gamma_t},
\end{eqnarray*}
then $X_K^{L+1}\deq Y_{L+1,\gamma,\alpha}$ and all of the $X_h^{L+1}$ are periodic with period $R_{L+1}T_{L+1}$.
\\
\\ We now have a family $\{T_{L+1},R_{L+1},f_1^{L+1},\dots,f_K^{L+1},X_1^{L+1},\dots,X_K^{L+1},E_{L+1}, Q_{x,L+1}\}$, with $T_{L+1}\in\Qset$ and $(T_{L+1},D)=(R_{L+1},D)=1$.  We must check the four properties of Step $(K,M,L+1)$:
\begin{description}
\item[(1)] $f_1^{L+1}, \dots,f_K^{L+1}\in\ell^1(\Z_{T_{L+1}})$ with $f_h^{L+1}\geq0$ and $1\leq\E f_h^{L+1}\leq(1+4\varepsilon)^{(K-1)M+L+1}$.
\begin{proof}
For $1\leq h\leq {K-1}$, by the inductive hypothesis we see
\begin{eqnarray*}
\E f_h^{L+1}&=&\E \tilde f_h^L\P(\Delta_q)+\E \bar g_h^L\\
&\leq& (1-\gamma+\varepsilon)\E f_h^L+\gamma \E g_h^L\\
&\leq& (1+\varepsilon)(1+4\varepsilon)^{(K-1)M+L}\\
&\leq& (1+4\varepsilon)^{(K-1)M+L+1}.
\end{eqnarray*}
For $f_K^{L+1}$, this follows from (\ref{growl}) and the inductive hypothesis.
\end{proof}
\item[(2)] $X_1^{L+1},\dots,X_K^{L+1}\in\ell^1(\Z_{R_{L+1}T_{L+1}})$ pairwise independent with $X_h^{L+1}\deq Y_{M,\gamma,\alpha}$ for $1\leq h\leq {K-1}$, and $X_K^{L+1}\deq Y_{L+1,\gamma,\alpha}$.
\begin{proof}
We have inductively assumed that $X_1^L,\dots,X_K^L$ are pairwise independent; since the periodic rearrangement preserves joint distribution, this is true of $\tilde X_1^L,\dots,\tilde X_K^L$ as well.  We begin by considering the conditional expectations of the $\tilde X_h^L$ on $\Delta_q$.  Recall that $q$ is relatively prime to $p$, $T_L$, $R_L$, $S$, and $R'$, so that $\Delta_q$ contains an equal proportion of all residue classes modulo $pT_LR_LSR'$.  Furthermore, $\Gamma_s$ and $\Gamma_t$ each contain an equal proportion of all residue classes modulo $R'R_LT_{L+1}$.  Thus for any $h< K$ and $\lambda_h,\lambda_K>0$,
\begin{eqnarray*}
\P(x\in\Delta_q: X_h^{L+1}(x)\geq\lambda_h, X_K^{L+1}(x)\geq\lambda_K)&=&\P(x\in\Delta_q\cap \Gamma_s,\tilde X_h^L(x)\geq\lambda_h, \tilde X_K^L(x)\geq\lambda_K)\\
&=& \frac sr\P(\Delta_q)\P(\tilde X_h^L(x)\geq\lambda_h)\P(\tilde X_K^L(x)\geq\lambda_K)\\
&=&\P(\Delta_q)\P(X_h^{L+1}(x)\geq\lambda_h)\P(X_K^{L+1}(x)\geq\lambda_K).
\end{eqnarray*}
Thus $\E(X_h^{L+1}\mid \Delta_q)$ and $\E(X_K^{L+1}\mid \Delta_q)$ are independent; similarly, $\E(X_h^{L+1}\mid \Delta_q)$ and $\E(X_{h'}^{L+1}\mid \Delta_q)$ are independent for any $h<h'<K$.
\\
\\We proceed similarly on the rest of $\Z$, letting $\Sigma$ denote either $\Psi_q$ or $\Z_{T_{L+1}}\setminus(\Delta_q\cup\Psi_q)$; on each of these, Lemma \ref{watchtower} implies that $\E(\bar Z_1^L\mid\Sigma),\dots,\E(\bar Z_{K-1}^L\mid\Sigma)$ are pairwise independent and distributed like $Y_{M,\gamma,\alpha}$.  Clearly $\E(\1_{\Gamma_t}\mid \Sigma)$ is independent of any of these, so $\E(X_1^{L+1}\mid\Sigma),\dots,\E(X_K^{L+1}\mid\Sigma)$ are pairwise independent for $\Sigma=\Psi_q$ or $\Z_{T_{L+1}}\setminus(\Delta_q\cup\Psi_q)$.
\\
\\Now for each $1\leq h< K$, 
\begin{eqnarray}
\label{norwegian}
\E(X_h^{L+1}\mid\Delta_q)\deq\E(X_h^{L+1}\mid\Psi_q)\deq \E(X_h^{L+1}\mid \Z_{T_{L+1}}\setminus(\Delta_q\cup\Psi_q))\deq X_h^{L+1}\deq Y_{M,\gamma,\alpha},
\end{eqnarray}
 and so for any $1\leq h < h'\leq K$ and $\lambda_h,\lambda_{h'}>0$,
\begin{eqnarray*}
\P(X_h^{L+1}(x)\geq\lambda_h, X_{h'}^{L+1}(x)\geq\lambda_{h'})&=&\sum_\Sigma \P(x\in\Sigma: X_h^{L+1}(x)\geq\lambda_h, X_{h'}^{L+1}(x)\geq\lambda_{h'})\\
&=&\sum_\Sigma \P(\E(X_h^{L+1}\mid\Sigma)(x)\geq\lambda_h, \E(X_{h'}^{L+1}\mid\Sigma)(x)\geq\lambda_{h'})\P(\Sigma)\\
&=&\sum_\Sigma \P(\E(X_h^{L+1}\mid\Sigma)(x)\geq\lambda_h)\P(\E(X_{h'}^{L+1}\mid\Sigma)(x)\geq\lambda_{h'})\P(\Sigma)\\
&=&\sum_\Sigma \P(X_h^{L+1}(x)\geq\lambda_h)\P(\E(X_{h'}^{L+1}\mid\Sigma)(x)\geq\lambda_{h'})\P(\Sigma)\\
&=&\P(X_h^{L+1}(x)\geq\lambda_h)\sum_\Sigma \P(x\in\Sigma: X_{h'}^{L+1}(x)\geq\lambda_{h'})\\
&=&\P(X_h^{L+1}(x)\geq\lambda_h)\P( X_{h'}^{L+1}(x)\geq\lambda_{h'}).
\end{eqnarray*}
(The sum is over the sets $\Sigma=\Delta_q,\Psi_q,$ and $\Z_{T_{L+1}}\setminus(\Delta_q\cup\Psi_q)$; the property (\ref{norwegian}) enters in at the fourth equality.)  Thus we have preserved independence.
\\ \\ We have already noted that $X_K^{L+1}\deq Y_{L+1,\gamma,\alpha}$.
\end{proof}
\item[(3)] An exceptional set $E_{L+1}\subset \Z_{T_{L+1}}$ with $\P(E_{L+1})\leq \delta$.
\begin{proof}
$\P(E_{L+1})\leq\P(E_L^1(\omega))+\P( E_L^2)+\P(\bar E'_L)
\leq\delta/2+\delta/4+\delta\gamma/4\leq\delta.$
\end{proof}
\item[(4)] $Q_x=Q_{x,L+1}:\Z_{T_{L+1}}\to\Z^+$ with $Q_x\in\Qset$ and $Q_x\mid T_{L+1}\;\forall x$, such that for each $x\not\in E_{L+1}$,
\begin{eqnarray*}
\frac1{|\Lambda_{Q_x}|}\sum_{a\in\Lambda_{Q_x}} f_h^{L+1}(x+a)\geq X_h^{L+1}(x)&&\forall 1\leq h\leq K
\end{eqnarray*}
and
\begin{eqnarray*}
f_h(x+y-Q_x)=f_h(x+y)\qquad \forall1\leq h\leq K,\; Q_x\leq y\leq \psi(AQ_x).
\end{eqnarray*}
\begin{proof}
Since $\tilde Q_{x,L}\mid T_L$ and $\bar Q'_x\mid \bar S$, clearly $Q_{x,L+1}\mid T_{L+1}$; in addition, $\tilde Q_{x,L}\in\Qset$ and $\bar Q'_x=pqT_LQ'_x\in\Qset$ (this is squarefree since $p,q,T_L,$ and $Q'_x$ are pairwise relatively prime).
\\
\\For $x\in\Delta_q\setminus E_{L+1}$ and $1\leq y\leq \psi(AT_L)$, we see that $$f_h^{L+1}(x+y)=\tilde f_h^L(x+y)=f_h^L(x+y+\xi_i(\omega))$$ for some $\omega$ fixed and $i$ depending only on $x$.  Thus \textbf{(4)} follows from the previous step.
\\ \\On $\Psi_q$ and $\Z\setminus(\Delta_q\cup\Psi_q)$, since $pqT_L \mid \bar Q'_x$, this is just Lemma \ref{watchtower} for $h<K$, and for $h=K$ this is Lemma \ref{fate} combined with the observation that $f_K^{L+1}$ is periodic by $pqT_L$.
\end{proof}
\end{description}
Thus we have proved Step $(K,M,L+1)$; by induction, we have proved Proposition \ref{KML} and Theorem \ref{jimi}.

\section{Notes on the Proof}

Several of the conditions, parameters and lemmas in this complicated argument appear on a first reading to be extraneous to the proof.  In the interest of clarity, we find it helpful to outline in hindsight the purposes of the following:

\begin{itemize}
\item The condition (\ref{wine}) lets us prove Theorem \ref{jimi} from Proposition \ref{KML} (note that this proof requires $\gamma/C\varepsilon_\gamma \to\infty$ as $\gamma\to0$).  (\ref{earth}) comes in at (\ref{alright}) and the subsequent definition of $\Psi_q$, and (\ref{line}) allows us to claim (\ref{worth}).
\item Prescribing an exact distribution $Y_{n,\gamma,\alpha}$ (defined in (\ref{distance})) for the $X_h^L$ is necessary in order to guarantee (\ref{norwegian}), which ensures that pairwise independence is preserved.
\item $\gamma$ and $\alpha$ must be dyadic rationals, and $\Delta_q$ and $\Psi_q$ must be chosen to satisfy (\ref{minute}), so that we can assume $(r,D)=1$ in (\ref{soul}), so that we can have $(R,D)=1$ in Step $(K,M,L)$, so that we can choose our $(K-1,M,M)$ family with $(R',q)=1$, so that $X_K^{L+1}$ will be independent of the other $X_h^{L+1}$.
\item The parameter $D$ lets us guarantee that when we inductively introduce a Step $(K-1,M,M)$ family, we can ensure that its period $R'S$ is relatively prime to $T_L,p,$ and $q$, thus allowing us to apply Lemma \ref{watchtower}.
\item We have two distinct parameters $T$ and $R$ because we will need the period of $f_K^L$ to be squarefree in Lemma \ref{fate} (because the result of \cite{Poisson} only applies for squarefree moduli), but the operation of reducing $X_K^{L+1}$ to its proper distribution will multiply its period by a large power of 2.
\item The condition (\ref{thief}) is necessary for (\ref{worth}), connecting the actual averages over the sequence $\{n_k\}$ with the averages over a set of residues $|\Lambda_Q|\in\Z_Q$.  The parameter $A$ must be allowed to take arbitrarily large values, although it need only be $\geq 1$ when used in (\ref{worth}), because each application of Lemma \ref{watchtower} divides it by a large constant.
\item The periodic rearrangement defined in (\ref{rearrange}) puts a uniform lower bound on the averages of $\tilde f_K^L$ over $Q_{x,L+1}$ on a set which depends only on $q$ and not on $f_K^L$; this allows us to prove Lemma \ref{fate}.
\end{itemize}

\noindent The author thanks his dissertation advisor, M. Christ, for several key suggestions including the external randomization, and M. Wierdl for many encouraging and stimulating discussions on this topic.

\bibliography{LaVictoire}{}

\begin{thebibliography}{10}

\bibitem{APET}
J.~Bourgain.
\newblock An approach to pointwise ergodic theorems.
\newblock In {\em Geometric aspects of functional analysis (1986/87)}, volume
  1317 of {\em Lecture Notes in Math.}, pages 204--223. Springer, Berlin, 1988.

\bibitem{JB3}
Jean Bourgain.
\newblock Pointwise ergodic theorems for arithmetic sets.
\newblock {\em Inst. Hautes \'Etudes Sci. Publ. Math.}, (69):5--45, 1989.
\newblock With an appendix by the author, Harry Furstenberg, Yitzhak Katznelson
  and Donald S. Ornstein.

\bibitem{DAAS}
Zolt{\'a}n Buczolich and R.~Daniel Mauldin.
\newblock Divergent square averages.
\newblock http://arxiv.org/abs/math/0504067v2.

\bibitem{DSA}
Zolt{\'a}n Buczolich and R.~Daniel Mauldin.
\newblock Concepts behind divergent ergodic averages along the squares.
\newblock In {\em Ergodic theory and related fields}, volume 430 of {\em
  Contemp. Math.}, pages 41--56. Amer. Math. Soc., Providence, RI, 2007.

\bibitem{Cohen}
Stephen~D. Cohen.
\newblock Uniform distribution of polynomials over finite fields.
\newblock {\em J. London Math. Soc. (2)}, 6:93--102, 1972.

\bibitem{Conze}
Jean-Pierre Conze.
\newblock Convergence des moyennes ergodiques pour des sous-suites.
\newblock In {\em Contributions au calcul des probabilit\'es}, pages 7--15.
  Bull. Soc. Math. France, M\'em. No. 35. Soc. Math. France, Paris, 1973.

\bibitem{Poisson}
Andrew Granville and P{\"a}r Kurlberg.
\newblock Poisson statistics via the {C}hinese remainder theorem.
\newblock {\em Adv. Math.}, 218(6):2013--2042, 2008.

\bibitem{Hooley}
Christopher Hooley.
\newblock On the difference between consecutive numbers prime to {$n$}. {II}.
\newblock {\em Publ. Math. Debrecen}, 12:39--49, 1965.

\bibitem{Sawyer}
S.~Sawyer.
\newblock Maximal inequalities of weak type.
\newblock {\em Ann. of Math. (2)}, 84:157--174, 1966.

\bibitem{TV}
Terence Tao and Van Vu.
\newblock {\em Additive combinatorics}, volume 105 of {\em Cambridge Studies in
  Advanced Mathematics}.
\newblock Cambridge University Press, Cambridge, 2006.

\bibitem{Primes}
M{\'a}t{\'e} Wierdl.
\newblock Pointwise ergodic theorem along the prime numbers.
\newblock {\em Israel J. Math.}, 64(3):315--336 (1989), 1988.

\end{thebibliography}
\bibliographystyle{plain}

\noindent \textsc{Patrick LaVictoire\\
Department of Mathematics, UC Berkeley \\  Berkeley, CA 94720-3840 USA}\\
\emph{E-mail:} patlavic@math.berkeley.edu\\
\emph{URL:} http://math.berkeley.edu/$\sim$patlavic/

\end{document}